\newtheorem{theorem}{Theorem}
\newtheorem{lemma}{Lemma}
\newtheorem{remark}{Remark}
\newtheorem{assumption}{Assumption}
\newcommand\barbelow[1]{\stackunder[1.2pt]{$#1$}{\rule{.8ex}{.075ex}}}
\begin{document}
\title{Approximation-free  control for unknown systems with performance and input constraints}
\author{Pankaj K Mishra and Pushpak Jagtap, \IEEEmembership{Member, IEEE}
\thanks{This paper is submitted for consideration of publication.}
\thanks{This work was partly supported by the Google Research Grant, the SERB Start-up Research Grant, and the CSR Grant by Nokia on Networked Robotics.}
\thanks{The authors are with the Robert Bosch Centre for Cyber-Physical Systems,
Indian Institute of Science, Bangalore 560012, India (e-mail:
\{pankajmishra,pushpak\}@iisc.ac.in)}}
% \thanks{The next few paragraphs should contain 
% the authors' current affiliations, including current address and e-mail. For 
% example, F. A. Author is with the National Institute of Standards and 
% Technology, Boulder, CO 80305 USA (e-mail: author@boulder.nist.gov). }
% \thanks{S. B. Author, Jr., was with Rice University, Houston, TX 77005 USA. He is 
% now with the Department of Physics, Colorado State University, Fort Collins, 
% CO 80523 USA (e-mail: author@lamar.colostate.edu).}
% \thanks{T. C. Author is with 
% the Electrical Engineering Department, University of Colorado, Boulder, CO 
% 80309 USA, on leave from the National Research Institute for Metals, 
% Tsukuba, Japan (e-mail: author@nrim.go.jp).}

\makeatletter
\AddToShipoutPicture{%
            \setlength{\@tempdimb}{.5\paperwidth}%
            \setlength{\@tempdimc}{.5\paperheight}%
            \setlength{\unitlength}{1pt}%
            \put(\strip@pt\@tempdimb,\strip@pt\@tempdimc){%
        \makebox(0,0){\rotatebox{45}{\textcolor[gray]{0.92}%
        {\fontsize{6cm}{6cm}\selectfont{Preprint}}}}%
            }%
}

\maketitle
\thispagestyle{empty} % Removes the page number in the first page

\begin{abstract}
This paper addresses the problem of tracking control for an unknown nonlinear system with time-varying bounded disturbance subjected to prescribed Performance and Input Constraints (PIC). Since simultaneous prescription of PIC involves a trade-off, we propose an analytical feasibility condition to prescribe feasible PIC which also yields feasible initial state space as corollary results. Additionally, an approximation-free controller is proposed to guarantee that the tracking performance adheres to the prescribed PIC. The effectiveness of the proposed approach is demonstrated through numerical examples.
\end{abstract}

\begin{IEEEkeywords}
Input constraints, nonlinear systems, performance constraints, tracking control.
\end{IEEEkeywords}

\section{Introduction}
% Researchers in academia and industry have developed various approaches \cite{8599076,ZHAI2019140} to design a controller for different types of nonlinear systems. 
In control system design, constraints play a crucial role in ensuring the stability, safety, and performance of the system. Constraints can be imposed on various system parameters, such as the input, state, and performance (output and error) to limit their values within a certain range. Among these constraints, input and performance constraints are the most common and have been extensively studied in the literature. 

Performance constraint aims for lower steady-state error, safe transient response, and fast convergence of tracking error. In contrast, input constraint focuses on actuator safety or control effort minimization. Improving performance with limited resources is always difficult. Same with simultaneous prescription of performance and input constraints (PIC) \cite{Yong2020}. In recent years, significant progress has been made in designing controllers for systems subject to PIC. Model Predictive Control (MPC), a popular optimal control technique, provides a powerful framework for incorporating various forms of constraints. However, it requires knowledge of the system and solves an optimization problem in each control loop, making it computationally expensive.\par
To deal with performance constraints other than MPC, important techniques are Barrier Lyapunov Function (BLF)\cite{TEE2009918,TEE20112511,5499019, 9621216} and error transformation \cite{Bechlioulis2008a,Bechlioulis2014a} based control design. The advantage of \cite{TEE2009918,TEE20112511,5499019,Bechlioulis2008a,Bechlioulis2014a} are that it is a Lyapunov-based controller design technique, thus providing a suitable framework to deal with the unknown system and also to deal with various class of problem such as fixed-time stabilization \cite{9707600}, leader–follower consensus for multi-agent systems \cite{9007508}, chasing unknown target \cite{9694521}, tracking control of interconnected non-affine nonlinear systems\cite{Mishra2021f}, and event trigger control \cite {8640807}. To conclude, it is worth noting that a considerable amount of research has been conducted in the field of prescribed performance constraints. Similarly, significant contributions have been made to the controller design for nonlinear systems with input constraints. Some noteworthy works in this area include \cite{HU2008512, Wen2011, Chen2015, CHEN2011452}. \par

Despite the extensive research on performance and input constraints  separately, the PIC problem remains challenging due to their inevitable trade-off nature in simultaneous prescription. Few results are available in the literature addressing both forms of constraints\cite{berger2022input,Hopfe2010,Hopfe2010a,Kanakis2020}, and those that do exist typically rely on  ad-hoc assumptions. In  \cite{berger2022input, Hopfe2010,Hopfe2010a} authors relax the performance constraint  whenever the input saturation is active. In \cite{Kanakis2020} assumptions are made on the existence of a feasible set of control inputs for a given initial conditions and actuator saturation limit. The major issue in this direction is the simultaneous prescription of feasible constraints. It is impractical to guarantee the feasibility of  arbitrarily prescribed constraints.\par

 Many practical systems always operate in some specified regions where they are controllable under input constraints \cite{Yong2020}. In the presence of input constraints, one cannot stabilise the unstable system globally. There is always a feasible set of initial conditions for input constraint. Also, given any desired trajectory for an uncertain nonlinear system with unknown bounded disturbances and arbitrary input constraints, it is certainly impractical to guarantee that the desired trajectory is trackable. For instance, a large external disturbance or a desired trajectory with a very large upper bound will invariably require the same level of opposing control command, which may extend beyond the input constraint \cite{Asl2019,10093463,10004950}, contradicting the arbitrary prescription of input constraint. Also, given any input constraint, the arbitrary prescription of performance constraint is impractical and always necessitates the need to devise a feasibility condition for a simultaneous prescription. Motivated by this, the contributions of our work are as follows.
 \begin{enumerate}
     \item Feasibility condition has been devised to avoid  arbitrary prescription of PIC.
    \item Utilizing a backstepping control framework, a novel approximation-free controller is proposed for tracking control in the presence of PIC
    \item Using the feasibility condition,  a viable set of  initial conditions is computed for the proposed controller. This contribution is a corollary of the first. 
 \end{enumerate}
%  The remainder of the paper is structured as follows. In Section II, preliminaries and problem statements are presented.  Section III presents the design of the controller and preliminaries  for the stability analysis given in Section IV.  Section V presents  the simulation results and discussion. Finally, Section VI concludes the paper.\par
The paper is structured as follows. Section \ref{sec2} provides an overview of the preliminary concepts and formulates the problem. Section \ref{sec3} presents the proposed controller. The mathematical analysis of the proposed controller is discussed in Section \ref{sec:anlysis}. The effectiveness of the proposed strategy is demonstrated through simulation studies in Section \ref{sec:simu}. Finally, Section \ref{sec:concl} concludes the paper.\par
\textbf{Notations:} The set of real, positive real, nonnegative real, and positive integer numbers are denoted by $\mathbb{R}$, $\mathbb{R}^+$, $\mathbb{R}_0^+ $, and $\mathbb{N}$, respectively. $\mathbb{N}_n$: $\{1,\ldots , n\}$, $n$ is positive integer. $\mathcal{L}^\infty$ represents the set of all essentially bounded measurable functions. For $x (t)\in \mathbb{R}$, $x\uparrow a$: $x$ approaches a real value $a$ from the left side,  $x\downarrow a$:  $x$ approaches a real value $a$ from the right side. $\bar 0_i$: $i$-dimensional zero vector.

\section{Preliminaries and the Problem Statement}\label{sec2}
Consider a pure feedback $n$th order nonlinear system
\begin{equation}\label{sys1}
\begin{split}
\dot \xi_i&=f_i\left(\bar{\xi}_i\right)+g_i\left(\bar{\xi}_i\right)\xi_{i+1}+d_i, \forall i\in\mathbb{N}_{n-1}\\
\dot \xi_n&=f_i\left(\bar{\xi}_n\right)+g_n\left(\bar{\xi}_n\right)\upsilon+d_n,\\
y&=\xi_1,
\end{split}
\end{equation} 
where for $i\in \mathbb{N}_n$, $\bar \xi_i(t)=[\xi_1(t), \ldots, \xi_i(t)]^T \in \mathbb{R}^i$ with $\xi_i(t) \in \mathbb{R}$ is the state vector,  $f_i:\mathbb{R}^{i}\rightarrow\mathbb{R}$ is the unknown smooth nonlinear map with $f_i(\bar{0}_i)=0$, $g_i:\mathbb{R}^{i}\rightarrow\mathbb{R}$ is the unknown control coefficient, and $d_i(t) \in \mathbb{R}$ is the unknown piecewise continuous bounded disturbance. $\upsilon(t) \in \mathsf{U}\subseteq \mathbb{R}$ and $y(t) \in \mathbb{R}$ are the input and output of the system. \par
In order to define the control goal, a few variables are needed, which are defined as follows.
The desired output is defined as $y_d(t) \in \mathbb{R},$ and the output tracking error is defined as $z_1\coloneqq\xi_1-y_d$. In this paper, we will be using a well-known nonlinear control techniques framework, i.e., Backstepping, which will consist of $``n"$ error variables defined as follows,
\begin{align}
    z_i=\xi_i-\upsilon_{i-1}, \forall i\in\mathbb{N}_{n}, \label{error}
\end{align}
where $\upsilon^{}_0=y_d,$ and $z_i,~ \forall i\in\mathbb{N}_{n}$ are error variables of which $z_1$ is output tracking error;  $\upsilon_i,~ \forall i \in \mathbb{N}_{n-1}$ are virtual control inputs. To simplify the mathematical presentation in the paper, we will represent actual input $``\upsilon"$ as  $\upsilon_n=\upsilon$. For these variables, the associated constraints are defined as follows.\par
The performance constraints on the error variable $z_i$ are represented using a time-varying function $\psi_i: \mathbb{R}^+_0\rightarrow \mathbb{R}$, defined as
\begin{align}
    \psi_i(t)=(p_{i}-q_i)e^{-\mu_i t}+q_i, ~ \forall t\geq0, \label{constr} 
\end{align}
where $q_i$, $p_i\ge q_i $ and $\mu_i,~\forall i \in \mathbb{N}_n,$ are arbitrarily chosen positive constants that drive the bounds on the steady-state value, the initial value, and the decay rate of the error variables, respectively. Further, the input constraints on the virtual control inputs and actual input are represented using non-zero positive constants $\bar \upsilon_i \in  \mathsf{U}, ~ \forall i \in \mathbb{N}_{n-1},$  and $\bar \upsilon_n \in  \mathsf{U},$ respectively. It is worth to note that in \eqref{constr}, $\psi_i$ and $\dot \psi_i$ are bounded for $\forall t\geq0,$ and its bounds are given as
\begin{align}
    q_i\le{\psi_i}(t)\le p_i,~  &\forall (t,i)\in \mathbb{R}^+_0 \times \mathbb{N}_n, \label{psib}\\
    \mu_i(q_i - p_i)\le\dot \psi_i (t)\le 0,~ &\forall (t,i)\in \mathbb{R}^+_0 \times \mathbb{N}_n. \label{psidb}
\end{align}

\textit{Control Goal:} It is defined in twofold, first is to devise a feasibility condition to overcome the issue of arbitrary simultaneous prescription of performance and input constraints,  and second is to design a control law for \eqref{sys1} such that: $(i)$ the output $y(t)=\xi_1(t)$ tracks the desired output $y_d(t)$  without violating the prescribed performance constraint $\psi_1(t)$, i.e., output tracking error should satisfy its constraints $|z_1(t)|<\psi_1(t), \forall t\geq0$, $(ii)$ input should follow its prescribed constraints $\bar \upsilon_n$, i.e., $|\upsilon_n(t)|<\bar \upsilon_n, \forall t\geq0$   and $(iii)$ all closed-loop signals are bounded.\par
 In addition to the two goals listed above, this study also contributes by computing the viable set of state initial conditions, which is the third contribution. How to calculate this value is demonstrated in section \ref{sec:simu} (Simulation results and discussion).
\begin{remark}
    Computing a feasible set of the decay rate and bounds of the transient and steady state of the tracking error are other things that could be done using feasibility conditions. But in this paper, we have restricted our contributions to the computation of  feasible initial conditions of the state.
\end{remark}

To achieve the above goal, we raise the following assumptions.
\begin{assumption}[\cite{Zhang2017a}]\label{a1}
For all $i \in \mathbb{N}_n$, there exists a constant $k_i\ge0$  such that 
 $\abs{f_i(\bar\xi_{i})}\le k_i\norm{\bar\xi_{i}}$, where $k_i$ is a known Lipschitz constant.
\end{assumption}
Note that one can use Lipschitz constant inference approaches proposed in \cite{wood1996estimation,Bubeck2011,malherbe2017global} to estimate the Lipschitz constant of the unknown dynamics from a finite number of data collected from the system.
\begin{assumption}\label{ag}
There exist a nonnegative known constants $\barbelow g_i$ and $\bar g_i$, such that $\barbelow g_i\le g_i(\bar \xi_i)\le\bar g_i$, $\forall i \in \mathbb{N}_n.$  
\end{assumption}
\begin{assumption}\label{a3}
The desired trajectory $y_d$ and its time derivative $\dot y_d$ are continuous real-valued functions and there exist positive constants $\bar v_0$ and $r_0$ such that $|y_d(t)|\le \bar v_0$ and $|\dot y_d(t)|\le r_0$, $\forall t \in \mathbb{R}^+_0$.
\end{assumption}
\begin{assumption}\label{a4}
There exists a known constant $\bar d_i\ge 0$ such that disturbances $|d_i(t)|\le \bar d_i$, $\forall i \in \mathbb{N}_n$.
\end{assumption}
\begin{remark}\label{whyassump}
We will see later that assumptions \ref{a1} and \ref{ag} have been mentioned to devise feasibility conditions rather than the design of control.
\end{remark}
\section{Controller Design}\label{sec3}
%In this section, a controller design is explained. 
% To achieve the first goal, a fictitious controller $\upsilon_1$ will be designed for the state $\xi_2$. It will be designed such that if error $z_2(t)=\xi_2(t)-\upsilon_1(t)$ follows the fictitious performance constraint $\psi_2(t)$, then $z_1$ will not transgress its performance boundaries, or in other words, if $|z_2(t)|=|\xi_2(t)-\upsilon_1(t)|<\psi_2(t)$ then $|z_1(t)|<\psi_1(t)$. Consequently, in a recursive fashion, the controller design will involve $(n-1)$ virtual control inputs and $n$ error variables. The error variables are defined as follows.
% \begin{align}
%     z_i=\xi_i-\upsilon_{i-1}, \forall i\in\mathbb{N}_{n}, \label{error}
% \end{align}
% where $z_i,~ i\in\mathbb{N}_{n}$ are error variables, $\upsilon^{}_0=y_d$, is the desired output, $\upsilon_i,~ \forall i \in \mathbb{N}_{n-1}$ are virtual control inputs, and $\upsilon_n$ is the actual control input. 
% \begin{remark}
% As $\xi_{n+1}$ and $\upsilon_n$ are control inputs. It is to be noted that $z_{n+1}=0$. It is defined to make the stability analysis compact and simple. Later, it will be used in the stability analysis.
% \end{remark}

Before discussing the design of the controller, we first define an auxiliary variable $\theta_i$ as
\begin{align}
    \theta_i=\frac{z_i}{\psi_i},~\forall i \in \mathbb{N}_{n}.\label{theta}
\end{align}
% \hl{*****}
% Note that the \eqref{theta} implies that if $|\theta_i(t)|<1$ then $|z_i(t)|<\psi_i(t)$ for all $(t, i) \in { \mathbb{R} \times \mathbb{N}_n}$. Thus using the auxiliary variable $\theta_i(t)$, the control goal can be redefined as:\par
% For a set $\bar{\mathbb{X}}=(-1 ~ 1)$ and the prescribed input bound $|\upsilon_n(t)|<\bar  \upsilon_n$, design $\upsilon_i$ such that if $ |\theta_i(0)|\in \bar{\mathbb{X}}$ then $|\theta_i(t)| \in \bar{\mathbb{X}}~$  $\forall (t, i) \in {\mathbb{R^+} \times \mathbb{N}_n}$. \PJ{check set for $t$ it should be $\mathbb{R}^+_0$ everywhere! keep it consistent sometime you are using $[0,\infty)$ and some times $t>0$} \hl{revisit doubt} \par  

%  Using the auxiliary variable $\theta_i$, the  inputs $\upsilon_i$, $i \in \mathbb{N}_n,$  are designed such that $v_i(\theta_i),$ and have the following properties. \PJ{needs to rephrase this sentence}\\
%   $(P1)$  $\phi_i=\odv{\upsilon_i}{\theta_i}: \bar {\mathbb{X}} \rightarrow \Omega_i \subset  \mathbb{R},$ $\forall i \in \mathbb{N}_{n}$,\\
%  $(P2)$ $\phi_i=\odv{\upsilon_i}{\theta_i}<0~$ $\forall (\theta_i, i) \in \bar{\mathbb{X}} \times \mathbb{N}_{n}$, and\\
%  $(P3)$ $\upsilon_i:  \bar{\mathbb{X}} \rightarrow (-\bar v_i~ \bar v_i)$.\hl{*****} \hl{removed whole paragraph with two below subsections}\\

This auxiliary variable will be utilized in our approach for designing an approximation-free controller. In conventional approaches, controller design for unknown systems typically involves adaptive laws or learning agents to estimate unknown parameters in the control law. However, our proposed approach takes a different approach by utilizing the introduced auxiliary variable. We will now discuss the philosophical aspects of the proposed approximation-free controller before presenting it in subsequent sections.\par
\textit{Note:} In several instances in this paper, we will simplify the notation by omitting the explicit indication of the time variable ``$t$". This approach is taken to streamline the presentation without compromising the integrity or validity of our results.

\subsection{Philosophy behind an approximation-free controller}\label{sec:philo}
For a set $\bar{\mathbb{X}}=(-1 ~ 1)$, it can be inferred that in  \eqref{theta}, if $\theta_i(t) \in \bar{\mathbb{X}}$ then $|z_i(t)|<\psi_i(t)$ for all $(t, i) \in { \mathbb{R}_0^+ \times \mathbb{N}_n}$. Thus  if we can design a controller such that $\theta_1(t) \in \bar{\mathbb{X}},$  $\forall t \in \mathbb{R}_0^+ $, then we can assure that  performance constraint goal, i.e., $|z_1|<\psi_1,$ will be achieved. Further, if the designed controller has an additional feature, i.e., if $\theta_1(t) \in \bar{\mathbb{X}},$  $\forall t \in \mathbb{R}_0^+ $  then control input is bounded and satisfy its input constraint, i.e., $|\upsilon_n(t)|<\bar \upsilon_n$ holds  $\forall t \in \mathbb{R}_0^+.$ Based on the aforementioned discussion, the necessary property that the controller must possess can be highlighted as follows. \par
The controller constructed using $\theta_i$, i.e., $\upsilon_i(\theta_i)$, $i \in \mathbb{N}_n,$ should hold following properties,\\
 $(P1)$  $\phi_i=\odv{\upsilon_i}{\theta_i}: \bar {\mathbb{X}} \rightarrow \Omega_i \subset  \mathbb{R},$ $\forall i \in \mathbb{N}_{n}$,\\
 $(P2)$ $\phi_i=\odv{\upsilon_i}{\theta_i}<0~$ $\forall (\theta_i, i) \in \bar{\mathbb{X}} \times \mathbb{N}_{n}$, and\\
 $(P3)$ $\upsilon_i:  \bar{\mathbb{X}} \rightarrow (-\bar v_i~ \bar v_i)$.
\begin{remark}
    Noting the sign of $g_i{(\bar \xi_i)}$ in assumption \ref{ag}, it can be inferred that if $\theta_i$ will approach boundary points of $\bar{\mathbb{X}}$ in either direction, i.e.,  $1$ or $-1$ then properties $P1$ and $P2$, assure that  an equivalent amount of control input  will be applied in the opposite direction, so that  $\theta_i$ never approach its boundary points. Consequently, as discussed previously, this feature of the controller will help in achieving performance constraints. Further, it is easy to infer that  property $P3$ will help achieve input constraint.
\end{remark}
In the next section, we will see that  the proposed controller possesses the above properties.
 \subsection{Proposed approximation-free controller}
Following the properties discussed in section \ref{sec:philo},  with $c_i>0$, the inputs are designed as 
\begin{align}
    \upsilon_i=-\frac{2\bar v_i}{\pi}\arctan{\left(\frac{\pi}{2c_i}\tan{\frac{\pi}{2}\theta_i}\right)},~ \forall i \in \mathbb{N}_n. \label{vi}
\end{align}
Taking the time derivative of \eqref{vi}, we have
\begin{align}
   \dot \upsilon_i&=\phi_i \dot \theta_i,\label{inpd}
\end{align}
where
\vspace{-0.2cm}
\begin{align}
    \dot\theta_i&=\frac{\dot z_i \psi_i -z_i\dot \psi_i}{\psi^2_i}, ~\text{and}\label{thetad}\\
       \phi_i&=\odv{\upsilon_i}{\theta_i}=-\frac{2\pi \bar v_i c_i}{(4c_i^2-\pi^2) \cos^2{(\frac{\pi}{2}\theta_i})+\pi^2}, ~ \forall i \in \mathbb{N}_n,\label{phidr}
\end{align}
obtained by taking time derivatives of \eqref{theta} and \eqref{vi}, respectively.  
From \eqref{phidr} it can be inferred that for all $i \in \mathbb{N}_n$
\begin{align}
   \phi_i\in \Omega_i \subset \mathbb{R}, \label{vidrange}
\end{align}
where $\Omega_i =\left(\barbelow{\phi}_i ~\bar{\phi}_i\right]$ and
\begin{equation}
    \barbelow\phi_i=\begin{cases}
                  -\frac{\pi\bar v_i}{2c_i},~~~&0<c_i<\frac{\pi}{2},\\
                    -\frac{2}{\pi}\bar v_ic_i,~~~&c_i\ge\frac{\pi}{2}.
                    \end{cases} \label{ran1}
\end{equation}
\begin{equation}
    \bar\phi_i=\begin{cases}
                -\frac{2}{\pi}\bar v_ic_i,~~~&0<c_i<\frac{\pi}{2},\\
                  -\frac{\pi\bar v_i}{2c_i},~~~&c_i\ge\frac{\pi}{2}.
                    \end{cases}\label{ran2}
\end{equation}
Following \eqref{vidrange}-\eqref{ran2} and the facts that $\bar v_i>0$ and $c_i>0$, it can be inferred that 
\begin{align}
   \phi_i= \odv{\upsilon_i}{\theta_i}<0, ~ \forall i \in \mathbb{N}_n. \label{deriv}
\end{align}
Recalling \eqref{vi}, and following the fact, $\lim_{\theta_i\uparrow 1}{\upsilon_i}=-\bar v_i(\theta_i)$ and $\lim_{\theta_i\downarrow -1}{\upsilon_i}=\bar v_i(\theta_i), i \in \mathbb{N}_n,$ it can be concluded that
% \begin{align}
%     \lim_{\theta_i\uparrow 1}{\upsilon_i}&=-\bar v_i(\theta_i),~ \forall i \in \mathbb{N}_n,\\
%     \lim_{\theta_i\downarrow -1}{\upsilon_i}&=\bar v_i(\theta_i),~ \forall i \in \mathbb{N}_n,
% \end{align}
% it can be concluded that 
\begin{align}
   \abs{\upsilon_i}&<  \bar v_i, ~\forall (\theta_i,i) \in \bar{\mathbb{X}}\times \mathbb{N}_n.  \label{vib}
  \end{align} 
From \eqref{vidrange}, \eqref{deriv}. and  \eqref{vib}, it can be verified that \eqref{vi} holds  $(P1)$, $(P2)$ and  $(P3)$, respectively.
\begin{remark}\label{remarkparam}
In \eqref{vi}, $\theta_i=\frac{z_i}{\psi_i}$ and $\psi_i=(p_{i}-q_i)e^{-\mu_i t}+q_i$, so for each $i \in \mathbb{N}_n$, the designed inputs \eqref{vi} involves the fictitious design parameter $\bar v_i, c_i, p_i, q_i$, and $\mu_i$. However, for $i=1$ and $i=n$, ($p_1, q_1$, $\mu_1$) and ($\bar v_n =\bar  \upsilon$) will be prescribed beforehand as a performance constraint parameter for output tracking and control input bounds, respectively.\par
% The parameters of the designed inputs \eqref{vi} (explained in Remark \ref{remarkparam})  is chosen so that it satisfies the feasibility condition.
\end{remark}
The main results of the paper are theorized below.
\begin{theorem}\label{thm}
    Consider the system \eqref{sys1} satisfying Assumption \eqref{a1}-\eqref{a4}, if virtual control and control inputs are designed using \eqref{vi}, and the performance constraint $\psi_1$ and input constraint $\bar \upsilon$ is prescribed such that the following feasibility conditions hold:
\begin{equation}\label{cond}
    \begin{split}
        \varphi_i&<(\bar g_i+\barbelow g_i)\bar v_i+\mu_i(q^{}_{i}-p^{}_{i}), ~ \forall i \in \mathbb{N}_n,
    \end{split}
\end{equation}
 where 
 \begin{align}
     \varphi_i&= k_i\norm{\delta_{i}}+\bar d_i+\bar g_i p_{i+1}+\bar g_i \bar v_i+r_{i-1},~\forall i \in \mathbb{N}_{n-1} \label{varphi}\\
     \varphi_n&= k_n\norm{\delta_{n}}+\bar d_n+\bar g_n \bar v_n+r_{n-1}\label{varphin},
 \end{align} 
 with $\delta_{i}=[p^{}_1+\bar v_0, \ldots,p_i+\bar v_{i-1}]^T$ and 
\begin{align}\label{ri}
        r_i=\left(\frac{\varphi_i}{q_i}+\frac{\mu_i(p_i-q_i)}{p_i} \right)|\barbelow \phi_i|,~\forall i \in \mathbb{N}_{n},
\end{align} and a trivial condition 
\begin{align}\label{trivcond}
    |z_i(0)|<\psi_i(0), \text{~or~} |z_i(0)|<p_i,
\end{align}
then the system output and input will never transgress its prescribed performance constraint, i.e.,  $|z_1(t)|<\psi_1(t)$ and $|\upsilon(t)|<\bar \upsilon$, $\forall t \in \mathbb{R}^+_0$, respectively, and all the closed-loop signals will remain bounded.
\end{theorem}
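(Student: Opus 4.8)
\emph{Proof sketch.} The plan is to recast the closed loop in the normalized coordinates $\theta=(\theta_1,\dots,\theta_n)$ and carry out a prescribed-performance-type argument: establish a unique maximal solution, show that $\theta$ never leaves a fixed compact subset of $\bar{\mathbb{X}}^n=(-1,1)^n$, and then read off global existence together with the constraint and boundedness claims. Differentiating \eqref{theta} and inserting \eqref{error}, \eqref{sys1}, \eqref{vi} and \eqref{inpd} gives, for each $i\in\mathbb{N}_n$,
\[
\psi_i\dot\theta_i=f_i(\bar\xi_i)+g_i(\bar\xi_i)\big(z_{i+1}+\upsilon_i\big)+d_i-\dot\upsilon_{i-1}-\theta_i\dot\psi_i ,
\]
with the conventions $\upsilon_0=y_d$ and $z_{n+1}\equiv0$ (so $\varphi_n$ is as in \eqref{varphin}), and where $\bar\xi_i$, $z_{i+1}$ and $\dot\upsilon_{i-1}=\phi_{i-1}\dot\theta_{i-1}$ are, recursively, explicit functions of $t$ and of $\theta_1,\dots,\theta_{i+1}$. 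Since $\tan(\tfrac{\pi}{2}\theta_i)$ is smooth on $(-1,1)$, $\psi_i\ge q_i>0$ and $f_i,g_i$ are smooth, the stacked system $\dot\theta=H(t,\theta)$ has $H$ locally Lipschitz in $\theta$ on the open set $\bar{\mathbb{X}}^n$ and piecewise continuous in $t$; by \eqref{trivcond} we have $\theta(0)\in\bar{\mathbb{X}}^n$, so there is a unique maximal solution on an interval $[0,\tau_{\max})$ with $\theta(t)\in\bar{\mathbb{X}}^n$ for all $t\in[0,\tau_{\max})$.

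On $[0,\tau_{\max})$ I would first establish, by induction on $i$, the a priori estimates $|\dot z_i|<\varphi_i$ and $|\dot\upsilon_i|<r_i$. While $\theta_j\in\bar{\mathbb{X}}$ one has $|\xi_j|=|z_j+\upsilon_{j-1}|<\psi_j+\bar v_{j-1}\le p_j+\bar v_{j-1}$, hence $\norm{\bar\xi_i}<\norm{\delta_i}$ and $|f_i(\bar\xi_i)|\le k_i\norm{\bar\xi_i}<k_i\norm{\delta_i}$ by Assumption \ref{a1}; combining this with $|z_{i+1}|<\psi_{i+1}\le p_{i+1}$, $|\upsilon_i|<\bar v_i$ (property $(P3)$, i.e.\ \eqref{vib}), $|d_i|\le\bar d_i$ (Assumption \ref{a4}) and $|\dot\upsilon_{i-1}|<r_{i-1}$ (induction hypothesis; base case $|\dot\upsilon_0|=|\dot y_d|\le r_0$ from Assumption \ref{a3}) yields $|\dot z_i|<\varphi_i$. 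Then, using $|\phi_i|\le|\barbelow\phi_i|$ (from \eqref{ran1}--\eqref{ran2}), $\psi_i\ge q_i$ and $|\dot\psi_i/\psi_i|\le\mu_i(p_i-q_i)/p_i$ (which holds since $\dot\psi_i/\psi_i$ is monotone in $t$), one gets $|\dot\upsilon_i|=|\phi_i\dot\theta_i|<|\barbelow\phi_i|\big(\tfrac{\varphi_i}{q_i}+\tfrac{\mu_i(p_i-q_i)}{p_i}\big)=r_i$, closing the induction; in particular $\dot\upsilon_{i-1}$ is indeed bounded by $r_{i-1}$, the term that feeds the estimate on $\dot z_i$.

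The crux is then to produce, for each $i$, a threshold $\theta_i^\star\in(0,1)$ such that on $[0,\tau_{\max})$: $\theta_i(t)\in[\theta_i^\star,1)\Rightarrow\dot\theta_i(t)<0$, and symmetrically $\theta_i(t)\in(-1,-\theta_i^\star]\Rightarrow\dot\theta_i(t)>0$. Indeed, for $\theta_i\ge0$ we have $\upsilon_i\le0$ from \eqref{vi}, so $g_i(\bar\xi_i)\upsilon_i\le\barbelow g_i\upsilon_i$ by Assumption \ref{ag}; substituting this and the estimates above into the dynamics, together with $-\theta_i\dot\psi_i\le\mu_i(p_i-q_i)$, gives $\psi_i\dot\theta_i<\varphi_i-\bar g_i\bar v_i+\barbelow g_i\upsilon_i(\theta_i)+\mu_i(p_i-q_i)$, whose right-hand side tends, as $\theta_i\uparrow1$ (so that $\upsilon_i(\theta_i)\to-\bar v_i$), to $\varphi_i-(\bar g_i+\barbelow g_i)\bar v_i-\mu_i(q_i-p_i)$, which is strictly negative precisely by the feasibility condition \eqref{cond}; continuity of $\upsilon_i(\cdot)$ then furnishes $\theta_i^\star$, and the $-1$ side is analogous. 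A standard ``cannot-cross'' argument started from $\theta_i(0)\in\bar{\mathbb{X}}$ now forces $|\theta_i(t)|\le\bar\theta_i\coloneqq\max\{\theta_i^\star,|\theta_i(0)|\}<1$ on $[0,\tau_{\max})$, so $\theta(t)$ stays in the compact set $\prod_{i=1}^n[-\bar\theta_i,\bar\theta_i]\subset\bar{\mathbb{X}}^n$; since a finite $\tau_{\max}$ would force $\theta$ to leave every compact subset of $\bar{\mathbb{X}}^n$, we conclude $\tau_{\max}=\infty$. Finally, $|\theta_1(t)|<1$ gives $|z_1(t)|<\psi_1(t)$, \eqref{vib} gives $|\upsilon_n(t)|<\bar v_n=\bar\upsilon$, and $z_i,\xi_i,\upsilon_i,\dot\upsilon_i,\theta_i,\dot\theta_i$ are all bounded, so every closed-loop signal is bounded.

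I expect the main difficulty to be handling the coupling cleanly: the bounds $\varphi_i$ and $r_i$ are legitimate only while \emph{all} the $\theta_j$ stay in $\bar{\mathbb{X}}$, and the sign-of-$\dot\theta_i$-near-the-boundary step for index $i$ rests on those bounds (and on $\dot\upsilon_{i-1}$, hence on $\dot\theta_{i-1}$), so the a priori estimates and the invariance have to be run jointly on the maximal interval. Moreover, \eqref{cond} only controls the \emph{limit} of $\psi_i\dot\theta_i$ as $\theta_i\to\pm1$; this must be upgraded---by a continuity/compactness argument---to a genuine sign condition on a one-sided neighbourhood of each boundary point before the invariance argument can be closed.
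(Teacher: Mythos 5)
Your argument is correct, and its computational core coincides with the paper's: the same recursive a priori bounds ($\abs{f_i}\le k_i\norm{\delta_i}$, $\abs{z_{i+1}}<p_{i+1}$, $\abs{\dot\upsilon_{i-1}}<r_{i-1}$, i.e.\ the content of Lemmas \ref{lemmabb} and \ref{lemmar}), the same use of $\upsilon_i(\theta_i)\to\mp\bar v_i$ at the boundary together with Assumption \ref{ag}, and the same role for \eqref{cond}. What differs is the logical scaffolding. The paper hypothesizes a first instant $t_1$ of constraint violation, derives $\liminf_{(z_i-\psi_i)\uparrow0}\dot z_i<\mu_i(q_i-p_i)$ from \eqref{cond}, and contradicts the tautologies of Lemmas \ref{lemmainf}--\ref{lemmasup}; you instead pose the closed loop as an ODE for $\theta$ on the open set $\bar{\mathbb{X}}^n$, extract uniform repulsion thresholds $\theta_i^\star$ (noting the right-hand side of your bound on $\psi_i\dot\theta_i$ depends on $\theta_i$ only through the fixed continuous function $\upsilon_i(\cdot)$), and conclude via invariance of a compact subset plus the maximal-interval/escape-time theorem. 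The two boundary criteria are equivalent, since $\psi_i\dot\theta_i=\dot z_i-\theta_i\dot\psi_i$ reduces at $\theta_i=1$ to $\tfrac{\dif}{\dif t}(z_i-\psi_i)$. Your packaging buys two things the paper leaves implicit: an explicit existence/uniqueness statement for the closed-loop solution (the paper's ``first violation instant'' presupposes a well-defined continuous solution), and an honest upgrade of the limit statement \eqref{liminff} to a sign condition on a one-sided neighbourhood of $\theta_i=\pm1$ --- a point you correctly flag and which the paper's limit-only manipulation glosses over. The price is that your invariant set depends on $\theta_i(0)$ through $\bar\theta_i=\max\{\theta_i^\star,|\theta_i(0)|\}$, but condition \eqref{trivcond} guarantees $|\theta_i(0)|<1$, so this is harmless. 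No gap.
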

\begin{proof}
    The proof is given in Section \ref{sec:stability analysis}.
\end{proof}
\begin{remark}\label{vidotbound}
    In the preliminary section of the proof, i.e., Section \ref{sec:preanalysis}, we will find  that $r_i$, mentioned in  \eqref{ri} of Theorem \ref{thm}, is actually the bounds of virtual input derivatives (See Lemma \ref{lemmar}).
\end{remark}
\begin{remark}
    In brief, Theorem \ref{thm}  says that if  the physical parameters 
 of system \ref{sys1},  and the designed parameter (see Remark \ref{remarkparam}) of control input in \eqref{vi}, satisfies the feasibility conditions \eqref{cond} and trivial condition \eqref{trivcond}, then the system will track the desired trajectory and follow its prescribed PIC. Note that \eqref{trivcond} is also a feasibility condition, but we are referring  to it as trivial just because it is a straightforward condition for feasible performance constraint. 
\end{remark}
% \PJ{It will be good to discuss a little bit (in words) about the result in Theorem here!}

 \section{Stability Analysis}\label{sec:anlysis}
 In this section, we establish a few lemmas to assist the proof of Theorem \ref{thm} in Section \ref{sec:stability analysis}. For this, we need  closed-loop  error dynamics, which can be found by   
 taking the time derivative of \eqref{error} and following \eqref{sys1}, as given below
\begin{IEEEeqnarray}{ll}
\dot z_i&= f_i(\bar\xi_i)+g_i(\bar \xi_i)z_{i+1}+ g_i(\bar \xi_i)\upsilon_i +d_i-\dot \upsilon_{i-1}, \forall i \in \mathbb{N}_{n-1} \label{erd}\IEEEeqnarraynumspace\\
\dot z_n&= f_n(\bar\xi_n)+ g_n(\bar \xi_n)\upsilon_n +d_n-\dot \upsilon_{n-1}.\label{ernd}
\end{IEEEeqnarray}
In the next section,  a few results have been discussed to assist the proof given in section \ref{sec:stability analysis}.
\subsection{Preliminaries for stability analysis}\label{sec:preanalysis}
For the stability analysis, in the following, a  lemma is established for the boundedness of all the closed-loop signals.
\begin{lemma}\label{lemmab}
For a given $t^*\in \mathbb{R}^+$, if $\theta_i(t)\in \bar{\mathbb{X}} $ then $\upsilon_i(t),~ z_i(t),~ \xi_i(t),~ f_i(\bar \xi_i(t)),~ g_i(\bar \xi_i(t)),~ \dot z_i(t),~ \dot \theta_i(t)$ and $ \dot \upsilon_i(t) \in \mathcal{L}^\infty$,  $\forall (t,i)\in [0,t^*) \times \mathbb{N}_n$.  
\end{lemma}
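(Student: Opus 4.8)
The plan is to prove the chain of implications
$\theta_i \in \bar{\mathbb{X}} \;\Rightarrow\; \upsilon_i, z_i, \xi_i, f_i, g_i, \dot z_i, \dot\theta_i, \dot\upsilon_i \in \mathcal{L}^\infty$
by induction on $i$, working from $i=1$ up to $i=n$, since the error/virtual-input structure is cascaded. At each stage the quantities at level $i$ depend only on $\bar\xi_i$, on $\upsilon_{i-1}$ and its derivative $\dot\upsilon_{i-1}$ (which is available from stage $i-1$), and on $\theta_i$ itself. The argument is essentially a sequence of a-priori bounds; none of it needs the feasibility condition \eqref{cond} — that is reserved for the main proof which will actually show $\theta_1 \in \bar{\mathbb{X}}$ holds on $[0,t^*)$. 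Here we only assume it.

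\textbf{Step 1 (base quantities from $\theta_i$).} Assume $\theta_i(t)\in\bar{\mathbb{X}}=(-1,1)$ on $[0,t^*)$. By \eqref{vi} and the limits $\lim_{\theta_i\uparrow 1}\upsilon_i=-\bar v_i$, $\lim_{\theta_i\downarrow-1}\upsilon_i=\bar v_i$ established just above the theorem, we get $|\upsilon_i(t)|<\bar v_i$, hence $\upsilon_i\in\mathcal{L}^\infty$. Also from \eqref{psib}, $\psi_i(t)\in[q_i,p_i]$, so $z_i=\theta_i\psi_i$ satisfies $|z_i(t)|<\psi_i(t)\le p_i$, giving $z_i\in\mathcal{L}^\infty$. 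For $i=1$ this already yields $\xi_1=z_1+y_d\in\mathcal{L}^\infty$ by Assumption \ref{a3}; for $i\ge 2$, $\xi_i=z_i+\upsilon_{i-1}$ is bounded because $\upsilon_{i-1}\in\mathcal{L}^\infty$ by the induction hypothesis. Thus $\bar\xi_i\in\mathcal{L}^\infty$.

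\textbf{Step 2 (nonlinearities and error derivative).} With $\bar\xi_i\in\mathcal{L}^\infty$: Assumption \ref{a1} gives $|f_i(\bar\xi_i)|\le k_i\|\bar\xi_i\|$, so $f_i\in\mathcal{L}^\infty$; Assumption \ref{ag} gives $g_i(\bar\xi_i)\in[\barbelow g_i,\bar g_i]$, so $g_i\in\mathcal{L}^\infty$. Next, $\dot\upsilon_{i-1}$ is bounded: for $i=1$ set $\upsilon_0=y_d$ and use Assumption \ref{a3} ($|\dot y_d|\le r_0$); for $i\ge 2$, $\dot\upsilon_{i-1}\in\mathcal{L}^\infty$ by the induction hypothesis. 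Feeding these into the error dynamics \eqref{erd}--\eqref{ernd} — note $z_{i+1}=\theta_{i+1}\psi_{i+1}$ is bounded by $p_{i+1}$ using $\theta_{i+1}\in\bar{\mathbb{X}}$ and Assumption \ref{a4} for $d_i$ — shows $\dot z_i\in\mathcal{L}^\infty$. Then \eqref{thetad}, together with the bounds \eqref{psib} on $\psi_i$ (and $\psi_i\ge q_i>0$ so $1/\psi_i^2\le 1/q_i^2$) and \eqref{psidb} on $\dot\psi_i$, gives $\dot\theta_i\in\mathcal{L}^\infty$. Finally, $\phi_i\in\Omega_i\subset\mathbb{R}$ is bounded by \eqref{vidrange}--\eqref{ran2}, so \eqref{inpd} gives $\dot\upsilon_i=\phi_i\dot\theta_i\in\mathcal{L}^\infty$. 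This closes the induction.

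\textbf{Main obstacle.} The only subtlety is that all of this is conditional: the hypothesis "$\theta_i(t)\in\bar{\mathbb{X}}$" must be read as holding for every $i$ on the whole interval $[0,t^*)$, and one must be careful that the level-$i$ bounds genuinely only invoke level-$(i-1)$ data plus the level-$i$ and level-$(i+1)$ $\theta$'s, so the induction does not circle back on itself. There is also a mild bookkeeping point at $i=n$, where $z_{n+1}$ does not appear (the dynamics \eqref{ernd} have no $g_n z_{n+1}$ term), so the $\theta_{n+1}$ hypothesis is vacuous there. No estimate here is delicate — every bound is a direct consequence of \eqref{psib}, \eqref{psidb}, \eqref{vib}, \eqref{vidrange}, and Assumptions \ref{a1}--\ref{a4} — so the proof is a routine, if somewhat lengthy, cascade argument.
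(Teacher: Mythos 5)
Your proposal is correct and follows essentially the same route as the paper's own proof in Appendix~A: a recursive (inductive) cascade over $i$ that first bounds $\upsilon_i,z_i,\xi_i$ from $\theta_i\in\bar{\mathbb{X}}$ together with \eqref{vib}, \eqref{theta}, \eqref{psib}, then bounds $f_i,g_i$, and finally passes through \eqref{erd}--\eqref{ernd}, \eqref{thetad}, and \eqref{inpd} to obtain $\dot z_i,\dot\theta_i,\dot\upsilon_i\in\mathcal{L}^\infty$. The only cosmetic difference is that you invoke Assumption~\ref{a1} to bound $f_i$ where the paper appeals to smoothness of $f_i$ on the bounded range of $\bar\xi_i$; both are valid and the structure of the argument is identical.
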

\begin{proof}
See Appendix \ref{lemma1}
\end{proof}

% \begin{lemma}\label{lemmab}
% For a given $t^*\in \mathbb{R}^+$, if $\theta_i(t)\in \bar{\mathbb{X}} $ then $\upsilon_i(t)\in \mathcal{L}^\infty$, $z_i(t)\in \mathcal{L}^\infty$ and $\xi_i(t)\in \mathcal{L}^\infty$, $\forall (t,i)\in [0,t^*) \times \mathbb{N}_n$.  
% \end{lemma}
% \begin{proof}
% See Appendix \ref{lemma1}
% \end{proof}
% \begin{corollary}\label{coro1.1}
% For a given $t^*\in \mathbb{R}^+$, if $\theta_i(t)\in \bar{\mathbb{X}} $ then in \eqref{sys1}, $f_i \in \mathcal{L}^\infty$ and $g_i \in \mathcal{L}^\infty$, $~\forall (t,i)\in [0,t^*) \times \mathbb{N}_n$.
% \end{corollary}
% \begin{proof}
% See Appendix \ref{corr1.1}
% \end{proof}

%  \begin{corollary}\label{coro1.2}
% For a given $t^*\in \mathbb{R}^+$,  if $\theta_i(t)\in \bar{\mathbb{X}} $ then $\dot z_i \in \mathcal{L}^\infty$, $\dot \theta_i \in \mathcal{L}^\infty$ and $\dot \upsilon_i \in \mathcal{L}^\infty$, $~\forall (t,i)\in [0,t^*) \times \mathbb{N}_n$.
% \end{corollary}
% \begin{proof}
% See Appendix \ref{corr1.2}
% \end{proof}

Next, a few lemmas are established for determining the bounds of closed-loop signals. Later, it will be used in the stability analysis. 
\begin{lemma}\label{lemmabb}
For a given $t^*\in \mathbb{R}^+$, if $\theta_i(t)\in \bar{\mathbb{X}} $ then $\abs{z_i}<\psi_i$, $\abs{\xi_i}<\psi_i+\bar v_{i-1}$, and $\abs{f_i}\le k_i\norm{\delta_i}$, $~\forall (t,i)\in [0,t^*) \times \mathbb{N}_n$.
\end{lemma}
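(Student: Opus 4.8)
\textbf{Proof proposal for Lemma \ref{lemmabb}.}

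The plan is to derive the three stated bounds in a chain, each one feeding the next, under the standing hypothesis $\theta_i(t)\in\bar{\mathbb{X}}=(-1,1)$ for all $t\in[0,t^*)$ and all $i\in\mathbb{N}_n$. First I would handle $\abs{z_i}<\psi_i$: by definition \eqref{theta}, $z_i=\theta_i\psi_i$, and since $\psi_i(t)>0$ for all $t$ (recall the bound $0<q_i\le\psi_i\le p_i$ from \eqref{psib}) and $\abs{\theta_i}<1$, we immediately get $\abs{z_i}=\abs{\theta_i}\psi_i<\psi_i$. This is the base case of the induction on $i$ that the rest of the argument rides on.

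Next I would bound the state $\xi_i$. Using the backstepping relation \eqref{error}, $\xi_i=z_i+\upsilon_{i-1}$, so $\abs{\xi_i}\le\abs{z_i}+\abs{\upsilon_{i-1}}$. For $i\ge 2$, the virtual input $\upsilon_{i-1}$ is given by \eqref{vi} and satisfies $\abs{\upsilon_{i-1}}<\bar v_{i-1}$ by \eqref{vib} (valid precisely because $\theta_{i-1}\in\bar{\mathbb{X}}$); for $i=1$ we use $\upsilon_0=y_d$ with $\abs{y_d}\le\bar v_0$ from Assumption \ref{a3} — so the notation $\bar v_0$ is consistent across the two cases. Combining with the previous step, $\abs{\xi_i}<\psi_i+\bar v_{i-1}$.

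Finally, for the bound on $f_i$, I would invoke Assumption \ref{a1}: $\abs{f_i(\bar\xi_i)}\le k_i\norm{\bar\xi_i}$. Since $\norm{\bar\xi_i}=\bigl(\sum_{j=1}^i\xi_j^2\bigr)^{1/2}$ and each component obeys $\abs{\xi_j}<\psi_j+\bar v_{j-1}\le p_j+\bar v_{j-1}$ (using $\psi_j\le p_j$ from \eqref{psib}), we get $\norm{\bar\xi_i}\le\norm{\delta_i}$ with $\delta_i=[p_1+\bar v_0,\ldots,p_i+\bar v_{i-1}]^T$ as defined in the theorem statement. Hence $\abs{f_i}\le k_i\norm{\delta_i}$, where the non-strict inequality absorbs the limiting cases. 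Note the slight subtlety that the componentwise strict bounds $\abs{\xi_j}<p_j+\bar v_{j-1}$ give $\norm{\bar\xi_i}<\norm{\delta_i}$ when $\delta_i\ne\bar 0_i$, so writing $\le$ is safe in all cases.

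The only real obstacle is bookkeeping rather than mathematics: one must be careful that every bound used is valid on the whole interval $[0,t^*)$ and that the $i=1$ boundary case (where $\upsilon_0=y_d$ rather than an $\arctan$-type expression) is folded in correctly via Assumption \ref{a3}; everything else is a direct substitution of the definitions \eqref{theta}, \eqref{error}, \eqref{vi} together with the elementary bounds \eqref{psib}, \eqref{vib} and Assumption \ref{a1}. I would also remark that this lemma is essentially a quantitative refinement of Lemma \ref{lemmab}, which already established membership in $\mathcal{L}^\infty$; here we merely track the explicit constants, so no new machinery is needed.
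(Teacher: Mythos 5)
Your proposal is correct and follows essentially the same route as the paper's own proof: the chain $\abs{\theta_i}<1 \Rightarrow \abs{z_i}<\psi_i \Rightarrow \abs{\xi_i}<\psi_i+\bar v_{i-1} \Rightarrow \norm{\bar\xi_i}\le\norm{\delta_i} \Rightarrow \abs{f_i}\le k_i\norm{\delta_i}$ via \eqref{theta}, \eqref{error}, \eqref{vib}, \eqref{psib}, and Assumption \ref{a1}. Your explicit handling of the $i=1$ case ($\upsilon_0=y_d$ with $\abs{y_d}\le\bar v_0$ from Assumption \ref{a3}) and of the componentwise-to-norm passage is slightly more careful than the paper, which leaves both implicit, but the argument is the same.
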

\begin{proof}
See Appendix \ref{lemma2} 
\end{proof}
So far, all signals were directly dependent on $\theta_i$ whose bounds were presumed. So it was straightforward to find the bounds of $z_i, \xi_i$, and $f_i,~  \forall (t,i)\in [0,t^*) \times \mathbb{N}_n.$ However, to determine the bounds of $\dot z_i, \dot \theta_i$, and $ \dot\upsilon_i, ~  \forall (t,i)\in [0,t^*) \times \mathbb{N}_n,$   we must know how the design parameters   are  chosen  in \eqref{vi}.\par

% \begin{remark}
% % Is a remark needed for defining $r_i$ till $i=n$?
% \end{remark}
 
%   \begin{lemma}\label{psi0}
% For the designed inputs \eqref{vi}, if $p_{i}$ is chosen as
% \begin{align}
% % p^{}_1&=\frac{\pi}{2}\abs{\xi_1(0)-y_d(0)} \label{psi10}\\
%     p_i&=\frac{\pi}{2}\left(\abs{\xi_i(0)}+\frac{\bar v_{i-1}}{c_{i-1}}\right)\label{psii0},~~ i=({2, \ldots, n}),
% \end{align}
% then $\abs{z_i(0)}<p_{i}$ $\forall i \in \mathbb{N}_n-{1}.$
% \end{lemma}
% \begin{proof}
% Provided in Appendix III.
% \end{proof}

\begin{lemma}\label{lemmar}
For a given $t^*\in \mathbb{R}^+$, if the design parameters in \eqref{vi} satisfy \eqref{cond}, and $\theta_i(t)\in \bar{\mathbb{X}}$ then $\abs{\dot \upsilon_i}\le r_i,$ $\forall (t,i)\in [0,t^*) \times \mathbb{N}_n.$
% \PJ{We need to elaborate on what is $r_i$! Refer to equations to connect all the results.}
\end{lemma}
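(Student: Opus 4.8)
The plan is to prove the bound by induction on $i\in\mathbb{N}_n$, exploiting the factorisation $\dot\upsilon_i=\phi_i\dot\theta_i$ from \eqref{inpd} together with the already-established bounds of Lemma \ref{lemmabb}. The starting observation is that, by \eqref{vidrange}--\eqref{ran2} and $\bar v_i,c_i>0$, the slope $\phi_i$ lies in $(\barbelow\phi_i,\bar\phi_i]$ with both endpoints negative and $|\barbelow\phi_i|\ge|\bar\phi_i|$ (checked separately in the two cases $0<c_i<\frac{\pi}{2}$ and $c_i\ge\frac{\pi}{2}$), hence $|\phi_i|\le|\barbelow\phi_i|$ on $\bar{\mathbb{X}}$. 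Lemma \ref{lemmab} guarantees that all signals appearing below are in $\mathcal{L}^\infty$ on $[0,t^*)$, so the derivative manipulations are legitimate.

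Next I would bound $\dot\theta_i$. From \eqref{thetad}, $|\dot\theta_i|\le\frac{|\dot z_i|}{\psi_i}+\frac{|z_i|}{\psi_i}\cdot\frac{|\dot\psi_i|}{\psi_i}$. Using $\psi_i\ge q_i$ from \eqref{psib}, $|z_i|<\psi_i$ from Lemma \ref{lemmabb}, and the elementary estimate $\frac{|\dot\psi_i|}{\psi_i}=\frac{\mu_i(p_i-q_i)e^{-\mu_i t}}{(p_i-q_i)e^{-\mu_i t}+q_i}\le\frac{\mu_i(p_i-q_i)}{p_i}$ (the right-hand side is the value at $t=0$, where this quotient is maximal), one obtains $|\dot\theta_i|<\frac{|\dot z_i|}{q_i}+\frac{\mu_i(p_i-q_i)}{p_i}$. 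Comparing with \eqref{ri}, it then suffices to show $|\dot z_i|\le\varphi_i$: multiplying by $|\barbelow\phi_i|\ge|\phi_i|$ gives exactly $|\dot\upsilon_i|=|\phi_i|\,|\dot\theta_i|\le r_i$.

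The remaining step, which is the heart of the argument, is to bound $|\dot z_i|$ via the closed-loop error dynamics. For $i\in\mathbb{N}_{n-1}$, take absolute values in \eqref{erd}: $|f_i(\bar\xi_i)|\le k_i\norm{\delta_i}$ by Lemma \ref{lemmabb}; $0\le g_i(\bar\xi_i)\le\bar g_i$ by Assumption \ref{ag}, together with $|z_{i+1}|<\psi_{i+1}\le p_{i+1}$ (Lemma \ref{lemmabb} and \eqref{psib}) and $|\upsilon_i|<\bar v_i$ by \eqref{vib}; $|d_i|\le\bar d_i$ by Assumption \ref{a4}; and $|\dot\upsilon_{i-1}|\le r_{i-1}$, which is the induction hypothesis when $i\ge2$ and, for $i=1$, is $|\dot y_d|\le r_0$ from Assumption \ref{a3} (recall $\upsilon_0=y_d$). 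Summing gives $|\dot z_i|\le k_i\norm{\delta_i}+\bar d_i+\bar g_i p_{i+1}+\bar g_i\bar v_i+r_{i-1}=\varphi_i$, and the same computation applied to \eqref{ernd} yields $|\dot z_n|\le\varphi_n$. This closes the induction and establishes $|\dot\upsilon_i|\le r_i$ for all $(t,i)\in[0,t^*)\times\mathbb{N}_n$.

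I expect the only real difficulty to be bookkeeping rather than anything conceptual: one must check that the recursively defined quantities $\delta_i,\varphi_i,r_i$ respect the inductive order (each $\varphi_i$ depends only on $r_{i-1}$ and on fixed constants), verify the endpoint inequality $|\phi_i|\le|\barbelow\phi_i|$ from \eqref{ran1}--\eqref{ran2}, and invoke Lemma \ref{lemmab} to be sure the derivatives in play actually exist and are essentially bounded on $[0,t^*)$. The feasibility condition \eqref{cond} is carried as a standing hypothesis of the lemma (it is what will later keep $\theta_i$ inside $\bar{\mathbb{X}}$ in the proof of Theorem \ref{thm}); the bound $|\dot\upsilon_i|\le r_i$ itself, however, follows from the working assumption $\theta_i(t)\in\bar{\mathbb{X}}$ alone.
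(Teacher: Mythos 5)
Your proposal is correct and follows essentially the same route as the paper's proof: a recursive (inductive) pass over $i$, bounding $|\dot z_i|$ by $\varphi_i$ via the error dynamics, Lemma \ref{lemmabb}, the assumptions, and the previously obtained $|\dot\upsilon_{i-1}|\le r_{i-1}$, then bounding $|\dot\theta_i|$ by $\varphi_i/q_i+\mu_i(p_i-q_i)/p_i$ and multiplying by $|\barbelow\phi_i|$. Your closing observation that the bound follows from $\theta_i\in\bar{\mathbb{X}}$ alone, with \eqref{cond} merely carried as a standing hypothesis, is also consistent with the paper, which never actually invokes \eqref{cond} inside this proof.
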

\begin{proof}
From \eqref{inpd}, it can be inferred that in order to determine the bounds of $\dot \upsilon_i$, we must know the bounds of $\dot\theta_i$. Using \eqref{theta} in \eqref{thetad}, we have 
\begin{align}\label{newtheta}
    \dot \theta_i=\frac{\dot z_i -\theta_i \dot \psi_i}{\psi_i}, ~~\forall i \in \mathbb{N}_n.
\end{align}
In the following sequence of steps, we will look for the bounds of $\dot \theta_i$, by seeking  the bounds of $\frac{\dot z_i}{\psi_i}$ and  $\frac{\theta_i\dot \psi_i}{\psi_i}$ in \eqref{newtheta}. Then consequently, using \eqref{inpd}, \eqref{ran1} and \eqref{ran2}, we can compute the bound of $\dot \upsilon_i$ in each  of the following steps.

\textit{Step 1:} For $i=1$ in \eqref{erd}, and following Lemma \ref{lemmabb} and noting Assumptions \ref{ag}-\ref{a4}, we have 
\begin{align}\label{z1bb}
    \abs{\dot z_1}\le k_1\norm{\delta_{1}}+\bar g_1\psi_2+  g_1\upsilon_1+\bar d_1 + r_{0}, \forall t \in [0,t^*).
\end{align}
Using \eqref{psib}, we have $\abs{\psi_2}\le p^{}_{2}$. Consequently, \eqref{z1bb} can be rewritten as
\begin{align}\label{z1bbb}
  \abs{\dot z_1}< k_1\norm{\delta_{1}}+\bar g_1 p^{}_{2}+  g_1\upsilon_1+\bar d_1 + r_{0}, \forall t \in [0,t^*).
\end{align}
Noting that $\theta_i \in  \bar{\mathbb{X}}$, using  \eqref{vib} we have $\abs{\upsilon_1}<\bar v_1$. Consequently following Assumption \ref{ag}, \eqref{z1bbb} can be rewritten as 
\begin{align} \label{z1bbbb}
    \abs{\dot z_1}< k_1\norm{\delta_{1}}+\bar g_1 p^{}_{2}+ \bar g_1\bar v_1+\bar d_1 + r_{0}, \forall t \in [0,t^*).
\end{align}
Following \eqref{varphi} and using \eqref{psib} in  \eqref{z1bbbb}, we have,
\begin{align}\label{z1f}
    \abs{\frac{\dot z_1}{\psi_1}}<\frac{\varphi_1}{q_1}, ~\forall t \in [0,t^*).
\end{align}
Further following \eqref{constr}, \eqref{psib}, and \eqref{psidb}, noting that $p_1\ge q_1$, we have
\begin{align}\label{dpsi1}
    \frac{\mu_1(q_1-p_1)}{p_1}\le{\frac{\dot \psi_1}{\psi_1}}\le0, ~\forall t \in [0,t^*).
\end{align}
Now, noting that $\theta_1 \in  \bar{\mathbb{X}}$ and following \eqref{dpsi1}, we have 
\begin{align}\label{dpsi1z1}
    % \abs{z_1\dot \psi_1}<-p_1\mu_1(q^{}_{1}-p^{}_{1})~\forall t \in [0,t^*).
    \abs{\frac{\theta_1\dot \psi_1}{\psi_1}}<\frac{\mu_1(p_1-q_1)}{p_1}, ~\forall t \in [0,t^*).
\end{align}
From \eqref{newtheta} for $i=1$, we have $|\dot \theta_1|=\abs{\frac{\dot z_1}{\psi_1} - \frac{\theta_1\dot \psi_1}{\psi_1}}$. Further  noting the inequality \eqref{z1f} and \eqref{dpsi1z1}, and using the triangular inequality we have, 
\begin{align}\label{dtheta1f}
    |\dot \theta_1|< \frac{\varphi_1}{q_1}+\frac{\mu_1(p_1-q_1)}{p_1}, ~\forall t \in [0,t^*).
\end{align}
 Now with the established bounds, we can look for the bounds of $\upsilon_1,~\forall t \in [0,t^*).$\\
 Using \eqref{inpd} for $i=1$, we have
 \begin{align}\label{alpha1b}
     \dot\upsilon_1=\phi_1\dot \theta_1.
 \end{align}
 Following  \eqref{phidr}-\eqref{deriv}, we have $\barbelow \phi _1<\phi_1\le\bar\phi_1<0.$ Noting that  and using \eqref{dtheta1f} in \eqref{alpha1b}, we have
 \begin{align}\label{alpha1dp1}
     |\dot\upsilon_1|<\left(\frac{\varphi_1}{q_1}+\frac{\mu_1(p_1-q_1)}{p_1} \right)|\barbelow \phi_1|, ~\forall t \in [0,t^*).
 \end{align}
 Using \eqref{ri}, we can rewrite \eqref{alpha1dp1} as
 \begin{align*}
     |\dot\upsilon_1|<r_1, ~\forall t \in [0,t^*).
 \end{align*}

 \textit{Step $i=(2, \ldots, n-1)$:} 
 In \eqref{erd}, noting Lemma \ref{lemmabb},   Assumption \ref{ag} and  Assumption \ref{a3}, $\forall i \in \{2, \ldots, n-1\}$, we have
\begin{align}\label{z1bbi}
    \abs{\dot z_i}\le k_i\norm{\delta_{i}}+\bar g_i\psi_{i+1}+  g_i\upsilon_i+\bar d_i + |\dot \upsilon_{i-1}|, \forall t \in [0,t^*).
\end{align}
% Using \eqref{alpha1df} for $i=2$ in \eqref{z1bbi}, and following the same procedure as step 1, one can easily obtain,
%  \begin{align}\label{alpha2df}
%      |\dot\upsilon_2|<r_2, ~\forall t \in [0,t^*).
%  \end{align}
 Recursively using $|\dot\upsilon_{i-1}|<r_{i-1}$ for the bounds of  $\dot z_i$ in \eqref{z1bbi}, $\forall i \in \{2, \ldots, n-1\}$, and following the same procedure as   step 1, we have $\forall t \in [0,t^*)~ \text{and}~ \forall i \in \{2, \ldots, n-1\} $
 \begin{align}\label{alpha2df}
     |\dot\upsilon_i|<r_i.
 \end{align}
\textit{ Step n:} In \eqref{ernd}, noting  Lemma \ref{lemmabb},  Assumption \ref{ag} and  Assumption \ref{a3}, we have 
\begin{align}\label{z1bbn}
    \abs{\dot z_n}\le k_n\norm{\delta_{n}}+  g_n\upsilon_n+\bar d_n + |\dot \upsilon_{n-1}|, \forall t \in [0,t^*).
\end{align}
Using \eqref{alpha2df} for $i=n-1$  and noting that $\theta_n \in  \bar{\mathbb{X}}$, using  \eqref{vib} we have $\abs{\upsilon_n}<\bar v_n$. Consequently, following Assumption \ref{ag}, \eqref{z1bbn} can be rewritten as 
\begin{align} \label{z1bbbbn}
    \abs{\dot z_n}< k_n\norm{\delta_{n}}+  \bar g_n\bar v_n+\bar d_n + r_{n-1}, \forall t \in [0,t^*).
\end{align}
Following \eqref{varphin}, and using \eqref{psib} in  \eqref{z1bbbbn}, we have,
\begin{align*}
    \abs{\frac{\dot z_n}{\psi_n}}<\frac{\varphi_n}{q_n}, ~\forall t \in [0,t^*).
\end{align*}

% Using \eqref{cond2} in  \eqref{z1bbbbn}, we have
% \begin{align}\label{z1nf}
%     \abs{\dot z_n}< (\bar g_n+\barbelow g_n)\bar v_n + \mu_n(q^{}_{n}-p^{}_{n}), ~\forall t \in [0,t^*).
% \end{align}
Now, following the same procedure as step 1, it is straightforward to obtain
\begin{align*}
     |\dot\upsilon_n|<r_n, ~\forall t \in [0,t^*).
 \end{align*}
 Thus we have $|\dot \upsilon_i|\le r_i,$ $\forall (t,i)\in [0,t^*) \times \mathbb{N}_n.$
\end{proof}

The proof of the theorem \ref{thm} involves a proof-by-contradiction approach instead  the classical Lyapunov-based stability analysis. For these two tautologies are established in the form of lemmas for the two possible cases of violation of performance constraints, and they are as follows.

\begin{lemma}\label{lemmainf}
 If $\theta_i \in  \bar{\mathbb{X}}$ (i.e., $-1<\theta_i<1,~ \forall i \in \mathbb{N}_n$) and $z_i$ is transgressing its upper bound then $(z_i-\psi_i)$ will approach $0$ from the left  side and 
 \begin{align}
   \lim_{(z_i-\psi_i)\uparrow 0}{\dot z_i}\ge\mu_i(q_{i}-p_{i}),~\forall i \in \mathbb{N}_n.  \label{infproof1}
 \end{align}
\end{lemma}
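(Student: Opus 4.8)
The plan is to unpack what ``transgressing the upper bound'' can mean under the standing hypothesis $\theta_i\in\bar{\mathbb{X}}$, and then read off the derivative inequality \eqref{infproof1} from an elementary monotonicity argument combined with the crude lower bound on $\dot\psi_i$ in \eqref{psidb}. This lemma is essentially a kinematic tautology, so I expect the argument to be short; the only real care is in how the one-sided limit is to be read.

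First I would note that $\theta_i(t)\in\bar{\mathbb{X}}$ together with \eqref{theta} and $\psi_i>0$ forces $-\psi_i(t)<z_i(t)<\psi_i(t)$, so $h_i(t):=z_i(t)-\psi_i(t)<0$ throughout the interval under consideration. Hence ``$z_i$ is transgressing its upper bound'' can only mean that $h_i(t)\to 0$ as $t$ increases toward some time $t^\ast$, and since $h_i$ stays strictly negative this approach is necessarily from the left, i.e. $(z_i-\psi_i)\uparrow 0$; this proves the first assertion. For \eqref{infproof1}, I would argue that $\dot h_i$ cannot be eventually strictly negative as $t\uparrow t^\ast$: if $\dot h_i<0$ on some $(t_0,t^\ast)$, then $h_i$ is strictly decreasing there and thus bounded above by $h_i(t_0)<0$, contradicting $h_i\to 0$. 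Consequently there is a sequence $\tau_k\uparrow t^\ast$ with $\dot h_i(\tau_k)\ge 0$ and $(z_i-\psi_i)(\tau_k)\uparrow 0$, whence $\dot z_i(\tau_k)=\dot h_i(\tau_k)+\dot\psi_i(\tau_k)\ge\dot\psi_i(\tau_k)\ge\mu_i(q_i-p_i)$ using \eqref{psidb}. Taking the limit along this approach gives precisely \eqref{infproof1}.

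An alternative route, perhaps closer to the notation used elsewhere, is to run the same monotonicity argument directly on $\theta_i=z_i/\psi_i$: since $\theta_i\uparrow 1$ one has $\dot\theta_i\ge 0$ along a sequence, so by \eqref{newtheta} $\dot z_i\ge\theta_i\dot\psi_i$, and because $\dot\psi_i\le 0$ and $0<\theta_i<1$ we get $\theta_i\dot\psi_i\ge\dot\psi_i\ge\mu_i(q_i-p_i)$, again yielding \eqref{infproof1}. The main (indeed only) subtlety — the ``hard part'' — is purely interpretive: $\dot z_i$ need not converge as $z_i-\psi_i\uparrow 0$, so the symbol $\lim_{(z_i-\psi_i)\uparrow 0}\dot z_i$ has to be understood in the $\liminf$ sense along the transgression, and the content of the lemma is that this limiting value is no smaller than $\mu_i(q_i-p_i)$. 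Everything else is immediate. The downstream role, in the proof of Theorem~\ref{thm}, will be to contradict the complementary upper bound on $\dot z_i$ that holds when $\upsilon_i\to-\bar v_i$ (from \eqref{vi}, \eqref{erd} and Assumptions~\ref{ag}, \ref{a1}, \ref{a3}, \ref{a4}), namely $\limsup \dot z_i\le\varphi_i-(\bar g_i+\barbelow g_i)\bar v_i<\mu_i(q_i-p_i)$ under the feasibility condition \eqref{cond}.
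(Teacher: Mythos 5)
Your proof is correct and follows essentially the same route as the paper's: both first use $\theta_i\in\bar{\mathbb{X}}$ and \eqref{theta} to get $-2\psi_i<z_i-\psi_i<0$, so the approach to $0$ is necessarily from the left, and then conclude $\dot z_i\ge\dot\psi_i\ge\mu_i(q_i-p_i)$ at the transgression via \eqref{psidb}; your monotonicity argument on $h_i=z_i-\psi_i$ actually makes rigorous what the paper only asserts informally (``the time derivative of $(z_i-\psi_i)$ will be greater than equal to $0$''). The one small wrinkle is that your sequence extraction strictly yields the bound for the $\limsup$ of $\dot z_i$ rather than the $\liminf$ you mention, but the paper is no more precise about this one-sided limit, and the contradiction in Theorem~\ref{thm} goes through either way since \eqref{prfg1} is a pointwise upper bound.
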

\begin{proof}
Given that $-1<\theta_i<1,~ \forall i \in \mathbb{N}_n$, then from the definition of $\theta_i$ in \eqref{theta}, we have
%\begin{align}\label{case1}
    $-\psi_i<z_i<\psi_i,~ \forall i \in \mathbb{N}_n$, which implies 
%\end{align}
%From  \eqref{case1}, we can infer that 
%\begin{align}\label{case1f}
    $-2\psi_i<z_i-\psi_i<0,~ \forall i \in \mathbb{N}_n.$
%\end{align}
Thus one can analyze that if $z_i$ transgresses its upper bound, i.e., $\psi_i$, then $(z_i-\psi_i)$ will approach $0$ from the left side. Consequently, it is straightforward to realize that while transgressing from the left side, the time derivative of  $(z_i-\psi_i)$ will be greater than equal to $0$. As a result, we have
\begin{align}\label{zphi}
    \lim_{(z_i-\psi_i)\uparrow 0}{\dot z_i}\ge\dot\psi_i~\forall i \in \mathbb{N}_n
\end{align}
Noting \eqref{psidb}, we can write  \eqref{zphi} as $\lim_{(z_i-\psi_i)\uparrow 0}{\dot z_i}\ge\mu_i(q_{i}-p_{i})~\forall i \in \mathbb{N}_n.$ This completes the proof.
% \begin{align*}
%  \lim_{(z_i-\psi_i)\uparrow 0}{\dot z_i}\ge\mu_i(q_{i}-p_{i})~\forall i \in \mathbb{N}_n.
% \end{align*}
\end{proof}
\begin{lemma}\label{lemmasup}
 If $\theta_i \in  \bar{\mathbb{X}}$, i.e., $-1<\theta_i<1,~ \forall i \in \mathbb{N}_n$ and   $z_i$ is transgressing its lower bound then  $(z_i+\psi_i)$ will approach $0$ from the right  side and 
 \begin{align}
     \lim_{(z_i+\psi_i)\downarrow 0}{\dot z_i}\le-\mu_i(q_{i}-p_{i}),~\forall i \in \mathbb{N}_n.\label{supproof1}
 \end{align}
\end{lemma}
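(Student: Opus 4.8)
The plan is to mirror the proof of Lemma~\ref{lemmainf} exactly, replacing ``upper bound'' by ``lower bound'' and reversing all the corresponding inequalities. First I would invoke the hypothesis $\theta_i\in\bar{\mathbb{X}}$, i.e.\ $-1<\theta_i<1$, which by the definition \eqref{theta} of $\theta_i=z_i/\psi_i$ (together with $\psi_i>0$ from \eqref{psib}) gives $-\psi_i<z_i<\psi_i$ for all $i\in\mathbb{N}_n$. Adding $\psi_i$ to each part yields $0<z_i+\psi_i<2\psi_i$, so the quantity $z_i+\psi_i$ is strictly positive whenever the constraint is respected; hence if $z_i$ is to transgress its \emph{lower} bound $-\psi_i$, the signal $z_i+\psi_i$ must decrease to $0$, and it can only do so from the right, i.e.\ $(z_i+\psi_i)\downarrow 0$.

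Next I would use the elementary one-sided-limit fact that a differentiable function approaching $0$ from above must have nonpositive time derivative in the limit: $\lim_{(z_i+\psi_i)\downarrow 0}\frac{d}{dt}(z_i+\psi_i)\le 0$, that is, $\lim_{(z_i+\psi_i)\downarrow 0}\dot z_i \le -\dot\psi_i$ for all $i\in\mathbb{N}_n$. Finally I would apply the bound \eqref{psidb}, namely $\mu_i(q_i-p_i)\le\dot\psi_i\le 0$, which gives $-\dot\psi_i\le -\mu_i(q_i-p_i) = \mu_i(p_i-q_i)$; wait --- the sign here needs care, so let me instead use $-\dot\psi_i \le \mu_i(p_i - q_i)$, hence
\begin{align*}
    \lim_{(z_i+\psi_i)\downarrow 0}\dot z_i \le -\dot\psi_i \le \mu_i(p_i-q_i) = -\mu_i(q_i-p_i),\quad \forall i\in\mathbb{N}_n,
\end{align*}
which is precisely \eqref{supproof1}. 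This completes the argument.

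There is no real obstacle here: the statement is the symmetric twin of Lemma~\ref{lemmainf}, and the only point requiring a moment of attention is tracking the signs consistently when passing from ``$z_i+\psi_i$ decreasing to $0$ from the right'' to ``$\dot z_i \le -\dot\psi_i$'' and then substituting the two-sided bound on $\dot\psi_i$. I would double-check that $\lim_{(z_i+\psi_i)\downarrow 0}\dot\psi_i$ is being bounded by $\mu_i(q_i-p_i)$ from below (its worst case), so that $-\dot\psi_i$ is bounded from above by $-\mu_i(q_i-p_i)$, matching the claimed inequality. Everything else is immediate from the definition of $\theta_i$ and the algebra of the prescribed-performance funnel.
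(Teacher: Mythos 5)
Your proposal is correct and is exactly what the paper intends: the paper's own proof of this lemma is just the one-line remark that it mirrors Lemma~\ref{lemmainf}, and your argument ($0<z_i+\psi_i<2\psi_i$, hence approach from the right with $\tfrac{d}{dt}(z_i+\psi_i)\le 0$, then substituting $\dot\psi_i\ge\mu_i(q_i-p_i)$ from \eqref{psidb}) is precisely that mirrored proof with the signs handled correctly.
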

\begin{proof}
% Given that $-1<\theta_i<1,~ \forall i \in \mathbb{N}_n$, then from the definition of $\theta_i$ in \eqref{theta}, we have
% \begin{align}\label{case2}
%     -\psi_i<z_i<\psi_i,~ \forall i \in \mathbb{N}_n.
% \end{align}
% From  \eqref{case1}, we can infer that 
% \begin{align}\label{case2f}
%     0<z_i+\psi_i<2\psi_i,~ \forall i \in \mathbb{N}_n
% \end{align}
% From \eqref{case2f}, we can analyze that if $z_i$ is transgressing its lower bound, i.e, $-\psi_i$ then $(z_i+\psi_i)$ will approach $0$ from right side. Consequently, it is straightforward to realize that while transgressing from right side, time derivative of  $(z_i+\psi_i)$ will be less than equal to $0$, as a result, we have
% \begin{align}\label{zphi2}
%     \lim_{(z_i+\psi_i)\downarrow 0}{\dot z_i}\le\dot-\psi_i~\forall i \in \mathbb{N}_n
% \end{align}
% Noting \eqref{psidb}, we can write  \eqref{zphi2} as
% \begin{align}
%  \lim_{(z_i+\psi_i)\uparrow 0}{\dot z_i}\le-\mu_i(q_{i}-p_{i})~\forall i \in \mathbb{N}_n.
% \end{align}
It is similar to the proof of Lemma \ref{lemmainf}.
\end{proof}
%\textit{Note:} One can find similar proof of lemma \ref{lemmainf} and \ref{lemmasup} in \cite{10004950}. \hl{revisit, LCSS paper cited}
\subsection{Proof for Theorem 1}\label{sec:stability analysis}
In this section utilizing the results developed in the previous section, we will prove Theorem 1, which is as follows.\\
\begin{proof}
Stability analysis is done using proof-by-contradiction. Consider the following  statement:\par
Suppose the virtual inputs and input are designed as \eqref{vi}. If the design parameter follows \eqref{cond}, then there is at least an error variable  and  a time instant  at which the error variable will violate its performance constraint. Let the time instant be $t_1\in \{t_1,\ldots, t_j, \ldots, t_{\bar n} \},$ where $t_j\in\mathbb{R}^+$ represent $j$th instant of violation of performance constraint.\par
% Suppose the virtual inputs and input are designed as \eqref{vi}. The design parameter follows \eqref{cond}, then there exists at least an error variable (let us say $\theta_{\bar j}, ~\bar j\in \mathbb{N}_n$)  and  a time instant at which the error variable will transgress its constraint, or,
% $$ \exists~ t_j \text{~such that~} |\theta_{\bar j}(t_j)|>\psi_{\bar j}(t_j), \forall t_j \in  (t_1,\ldots, t_i, \ldots, t_{\bar n} ),$$ 
%  where $t_i<t_{i+1}$, $t_i$ represent $i${th} instant of violation of performance constraint, $i \in \mathbb{N}$, and $\bar n \in \mathbb{N}$. 
To begin with the proof, suppose that the above statement is  true, then we have the following
\begin{align}\label{zt1}
    |z_i(t)|<\psi_i(t),~ \forall (t,i)\in [0,t_1) \times \mathbb{N}_n.
\end{align}
% Suppose that at the instant of time $t_1$, the tracking error is transgressing its performance constraints (i.e., upper or lower bounds).
With the following analysis, we will see that error never transgresses its performance constraints. 

 Following \eqref{zt1} and \eqref{theta}, we have  $\theta_i \in \bar{\mathbb{X}}$,  $\forall (t,i)\in [0,t_1) \times \mathbb{N}_n.$ Thus, following \eqref{psib}, Lemma \ref{lemmabb},   Assumption \ref{ag} and \ref{a4}, and using them in \eqref{erd} and \eqref{ernd}, $\forall (t,i)\in [0,t_1) \times \mathbb{N}_{n-1},$ we have 
 \begin{IEEEeqnarray}{ll}
     \dot z_i&< k_i\norm{\delta_i}+\bar g_i p_{i+1}+ g_i(\bar \xi_i)\upsilon_i +\bar d_i-\dot \upsilon_{i-1},  \IEEEeqnarraynumspace\label{prfg1}\\
     \dot z_i&> -k_i\norm{\delta_i}-\bar g_i p_{i+1}+ g_i(\bar \xi_i)\upsilon_i -\dot \upsilon_{i-1}. \IEEEeqnarraynumspace\label{prfl1}
 \end{IEEEeqnarray}
and for $i=n,$ we have
\begin{IEEEeqnarray}{ll}
\dot z_n&< k_n\norm{\delta_n}+ g_n(\bar \xi_n)\upsilon_n +\bar d_n-\dot \upsilon_{n-1}, \label{prfgn}\\
\dot z_n&> -k_n\norm{\delta_n}+ g_n(\bar \xi_n)\upsilon_n -\bar d_n-\dot \upsilon_{n-1}. \label{prfln}
\end{IEEEeqnarray}

Following \eqref{vi} and \eqref{theta}, we infer that $\forall i \in \mathbb{N}_{n},$
\begin{align} 
    \liminf_{(z_i-\psi_i)\uparrow0}{\upsilon_i}&=-\bar \upsilon_i,\label{liminff}\\
    \limsup_{(z_i+\psi_i)\downarrow0}{\upsilon_i}&=\bar \upsilon_i\label{limsupp}.
\end{align}
Consequently, following Assumption \ref{ag}, we have  $\forall i \in \mathbb{N}_{n},$
\begin{align} 
    -\bar g_i\bar \upsilon_i&\le\liminf_{(z_i-\psi_i)\uparrow0}{g_i\upsilon_i}\le-\barbelow g_i\bar \upsilon_i,\label{liminf}\\
   \barbelow g_i\bar \upsilon_i&\le\limsup_{(z_i+\psi_i)\downarrow0}{g_i\upsilon_i}\le\bar g_i\bar \upsilon_i.\label{limsup}
\end{align}
Using \eqref{liminf} and \eqref{limsup} in \eqref{prfg1}-\eqref{prfl1}, $\forall i \in \mathbb{N}_{n-1},$ we have  
 \begin{IEEEeqnarray}{ll}
    \liminf_{(z_i-\psi_i)\uparrow0}{\dot z_i}&< k_i\norm{\delta_i}+\bar g_i p_{i+1}- \barbelow g_i\bar \upsilon_i \nonumber \\&+\bar d_i- \liminf_{(z_i-\psi_i)\uparrow0}{\dot \upsilon_{i-1}}, \IEEEeqnarraynumspace\label{prfg11}\\
 \limsup_{(z_i+\psi_i)\uparrow0}{\dot z_i}&> -k_i\norm{\delta_i}-\bar g_i p_{i+1}+ \barbelow g_i\bar \upsilon_i \nonumber\\& -\bar d_i-\limsup_{(z_i+\psi_i)\uparrow0}{\dot z_i}{\dot \upsilon_{i-1}}.\IEEEeqnarraynumspace\label{prfl11}
 \end{IEEEeqnarray}
Further, with $i=n$  using \eqref{liminf} and \eqref{limsup}  in  \eqref{prfgn}-\eqref{prfln}, we have
\begin{IEEEeqnarray}{ll}
 \liminf_{(z_n-\psi_n)\uparrow0}{\dot z_n}&< k_n\norm{\delta_n}- \barbelow g_n\bar \upsilon_n+\bar d_n-\hspace{-0.2cm}\liminf_{(z_n-\psi_n)\uparrow0}{\dot \upsilon_{n-1}}, \label{prfgnn}\\
 \limsup_{(z_n+\psi_n)\uparrow0}{\dot z_n}&> -k_n\norm{\delta_n}+ \barbelow g_n\bar \upsilon_n \hspace{-0.1cm}-\hspace{-0.1cm}\bar d_n\hspace{-0.1cm}-\hspace{-0.2cm}\limsup_{(z_n+\psi_n)\uparrow0}{\dot \upsilon_{n-1}}. \label{prflnn}
\end{IEEEeqnarray}
Since \eqref{cond} holds (mentioned in theorem \ref{thm}) and  $\theta_i \in \bar{\mathbb{X}},$ $\forall (t,i)\in [0,t_1) \times \mathbb{N}_n,$ thus using  Lemma \ref{lemmar} in \eqref{prfg11} and \eqref{prfl11},  $\forall (t,i)\in [0,t_1) \times \mathbb{N}_{n-1}$, we have
 \begin{align}
    \liminf_{(z_i-\psi_i)\uparrow0}{\dot z_i}&< k_i\norm{\delta_i}+\bar g_i p_{i+1}- \barbelow g_i\bar \upsilon_i  +\bar d_i+r_{i-1}, \label{a111}\\
 \limsup_{(z_i+\psi_i)\uparrow0}{\dot z_i}&> -k_i\norm{\delta_i}-\bar g_i p_{i+1}+ \barbelow g_i\bar \upsilon_i  -\bar d_i-r_{i-1}.\label{a222}
 \end{align}
Similarly using Lemma \ref{lemmar} in  \eqref{prfgnn} and \eqref{prflnn}, we have 
 \begin{align}
 \liminf_{(z_n-\psi_n)\uparrow0}{\dot z_n}&< k_n\norm{\delta_n}- \barbelow g_n\bar \upsilon_n+\bar d_n+r_{n-1}, \label{an1}\\
 \limsup_{(z_n+\psi_n)\uparrow0}{\dot z_n}&> -k_n\norm{\delta_n}+ \barbelow g_n\bar \upsilon_n \hspace{-0.1cm}-\hspace{-0.1cm}\bar d_n\hspace{-0.1cm}-r_{n-1}. \label{an2}
 \end{align}
Further using \eqref{varphi},  $\forall (t,i)\in [0,t_1) \times \mathbb{N}_{n}$, we can summarized  \eqref{a111}-\eqref{an2} as
\begin{align}
     \liminf_{(z_i-\psi_i)\uparrow0}{\dot z_i}&<\varphi_i- \bar g_i\bar \upsilon_i- \barbelow g_i\bar \upsilon_i, \label{aa1}\\
     \limsup_{(z_i+\psi_i)\uparrow0}{\dot z_i}&>-\varphi_i+\bar g_i\bar \upsilon_i+ \barbelow g_i\bar \upsilon_i.\label{aa2}
\end{align}

Incorporating \eqref{cond},  in \eqref{aa1} and \eqref{aa2}, $\forall i \in \mathbb{N}_{n},$ we have 
\begin{align}
\liminf_{(z_i-\psi_i)\uparrow0}{\dot z_i}&< \mu_i(q_i-p_i), \label{elemmainf}\\
    \limsup_{(z_i+\psi_i)\uparrow0}{\dot z_i}&>-\mu_i(q_i-p_i)\label{elemmasup}.
\end{align}
Recalling lemmas \ref{lemmainf} and \ref{lemmasup}, it can be  inferred that \eqref{infproof1} contradicts \eqref{elemmainf}, and \eqref{supproof1} contradicts \eqref{elemmasup}. Hence, over $[0, t_1)$, tracking error will never approach its  performance constraints. Consequently, it can be concluded that there is no $t_1$ in which the $z_i$ violates its performance constraint $\psi_i$. Since there does not exist the first instant of violation of the designed constraint, there is no time at which $z_i$ will violate its constraints $\psi_r$. Therefore, it can be concluded that  the proposed statement is false and
\begin{align}\label{ztf}
    |z_i(t)|<\psi_i(t),~ \forall (t,i)\in \mathbb{R}_0^+ \times \mathbb{N}_n.
\end{align}
Consequently, following \eqref{theta}, it can be concluded that $\theta_i \in \bar{\mathbb{X}}, \forall t \in \mathbb{R}_0^+$. Furthermore, noting property $(P3)$ of the designed control input, we have $\abs{\upsilon_n}<  \bar v_n,$ or $\abs{\upsilon}<  \bar v.$ Also, since $\theta_i \in \bar{\mathbb{X}}, \forall t \in \mathbb{R}_0^+$,    therefore using Lemma \ref{lemmab}, it can be concluded that all the closed-loop signals are bounded. This completes the proof.
\end{proof}

\section{{Simulation Results and Discussion}}\label{sec:simu}
For the simulation study, two numerical examples  are considered.\par
\textit{Example 1:} Considered an inverted pendulum \cite{7428909} described as 
\begin{equation}\label{exam1}
    \begin{split}
        \dot \xi_1&=\xi_2,\\
\dot \xi_2&=-\frac{g}{l}\sin(\xi_1)-\frac{k}{m}\xi_2+\sin(\xi_2)+\frac{1}{ml^2}\upsilon+d,\\y&=\xi_1,
    \end{split}
\end{equation}
where $\xi_1$ and $\xi_2$ are angular position and angular velocity, respectively, and $y$ is the output. The parameter $m=0.01$kg, $l=1$m, $k=0.01$, and $g$ are end mass, length of the rod, friction coefficient, and acceleration due to gravity, respectively, and $\upsilon$ is the control input. Further, $d=0.5\sin(t)$ is the disturbance applied to the system. The  initial states are chosen as $\xi_1(0)=-0.5$ and $\xi_2(0)=1$. Let the desired output be $y_d=\sin(0.5t)$, and   $z_1=y-y_d$ be tracking error. To verify the effectiveness of the proposed controller, the performance constraint $\psi_1=(p_1-q_1)e^{-\mu_1t}+q_1$ on tracking error is chosen as $p_1=|z_1(0)|+\Delta_1$, $\Delta_1>0$, so that $|z_1(0)|<p_1$, and the decay rate $\mu_1=0.9$ and $q=0.05$.   The parameter $\Delta_1$  signifies the allowable overshoot/undershoot  in the transient phase. For $\Delta_1=0.5$, the performance constraint can be prescribed as $\psi_1=(1-0.05)e^{-0.9t}+0.05$. The input constraint is prescribed as $|\upsilon|<\bar \upsilon$, with $\bar\upsilon=8$.\\ The controller is designed using \eqref{vi} as $$ \upsilon_i=-\frac{2\bar v_i}{\pi}\arctan{\left(\frac{\pi}{2c_i}\tan{\frac{\pi}{2}\theta_i}\right)}, i=1,2,$$ where for $i=1$, $\upsilon_1$ is virtual input with 
 $\theta_1=\frac{z_1}{\psi_1}$, and design parameter is chosen as $\bar \upsilon_1=4.5$ and $c_1=\frac{\pi}{2}$. For $i=2$, $\upsilon_2$ is actual input, or in \eqref{exam1}, $\upsilon=\upsilon_2$, with $\bar \upsilon_2=\bar \upsilon$ and $\theta_2=\frac{z_2}{\psi_2},$ where $z_2=\xi_2-\upsilon_1$, and remaining design parameter are  chosen as  $c_2=\frac{\pi}{2}$, and $\psi_2=(p_2-q_2)e^{-\mu_2t}+q_2$ where $p_2=1.4, q_2=0.05$ and $\mu_2=1$. Here, $p_2$ is chosen as $p_2=|z_2(0)|+\Delta_2$, with $\Delta_2=0.1$.
\begin{remark}
   As given in Theorem 1, the prescribed constraints must hold feasibility conditions \eqref{cond} so that it can be assured that the prescribed constraints are feasible in the proposed control framework. The simultaneous prescription of arbitrary  performance constraints and input constraints without looking at their feasibility is impractical. One must look for the feasibility of the prescribed constraints. 
\end{remark}

    The performance constraint parameter $p_i$ is parametrized as $p_i=|z_i(0)|+\Delta_i$, to verify the trivial feasibility condition easily and, most importantly, to find the feasible set of state initial conditions for the prescribed constraints. It can be done by substituting parametrized $p_i$ in feasibility conditions \eqref{cond}, such that the initial condition are variable in \eqref{cond} and the rest of the other parameters are fixed. It is also important to note that one can directly utilize $p_i$ and other parameters and initial conditions to look for the feasibility of the PIC. However, in the presence of input constraints, it is impractical to state that the designed controller is globally stable. So one must look for the feasible initial conditions set.  \par
The feasibility conditions can be verified by utilizing the above controller design parameter, system parameter and a few bounds related to system description, i.e., $k_1=0, k_2=9.8\sqrt{2}, \bar g_1=\barbelow g_1=1, \bar g_2=\barbelow g_2=10^2, \bar d_1=0, \bar d_2=0.5, \bar v_0=1, \text{~and~}r_0=0.5$. It is advised to the reader to verify these parameters by following their definition as given in assumptions for system \eqref{exam1}.

Let $\xi_1(0)=x'$ and $\xi_2(0)=y'$. Using this we can write  $p_1=|x'-y_d(0)|+\Delta_1$ and  $p_2=|y'-\upsilon_1(0)|+\Delta_2$ or  $p_2=|y'+\frac{2\bar v_1}{\pi}\arctan{(\frac{\pi}{2c_1}\tan{\frac{\pi}{2}\theta_1(0)})}| + \Delta_2$, Further using $\theta_1(0)=\frac{z_1(0)}{\psi_1(0)}$. Since $\psi_1(0)=p_1$, hence we can write $$p_2\hspace{-0.1cm}=|y'\hspace{-0.1cm}+\hspace{-0.1cm}\frac{2\bar v_1}{\pi}\hspace{-0.1cm}\arctan{(\frac{\pi}{2c_1}\tan{\frac{\pi}{2}(\frac{x'\hspace{-0.1cm}-\hspace{-0.1cm}y_d(0)}{|x'\hspace{-0.1cm}-\hspace{-0.1cm}y_d(0)|\hspace{-0.1cm}+\hspace{-0.1cm}\Delta_1})})}|\hspace{-0.1cm} +\hspace{-0.1cm} \Delta_2.$$ Considering $x'$ and $y'$ as unknown, and followng \eqref{cond}, and utilizing above $p_1$ and $p_2$, we have  two set of conditions 
% $(C1)$ and $(C2)$
\begin{equation}\label{condex}
\begin{split}
 (C1)~ \varphi_1&<(\bar g_1+\barbelow g_1)\bar v_1+\mu_1(q^{}_{1}-p^{}_{1}),\\
 (C2)~  \varphi_2&<(\bar g_2+\barbelow g_2)\bar v_2+\mu_2(q^{}_{2}-p^{}_{2}).
\end{split}
\end{equation}
The solution set for $x'$ and $y'$ for the above conditions is shown in Fig.\ref{figset}. We can find the  feasible set of the state initial conditions, shown in Fig. \ref{figset}, and contains (-0.5, 1). Thus the selected initial conditions and  prescribed constraints suit the proposed control framework. The simulation results are shown in Figs. \ref{figtrack}-\ref{figinp}. Fig. \ref{figtrack} shows tracking performance. Fig. \ref{figerror} and Fig. \ref{figinp} exhibit how tracking error and input obey their constraints, respectively. The results are as expected in Theorem 1.
\begin{figure}
    \centering
\includegraphics[width=8cm,height=2.5cm]{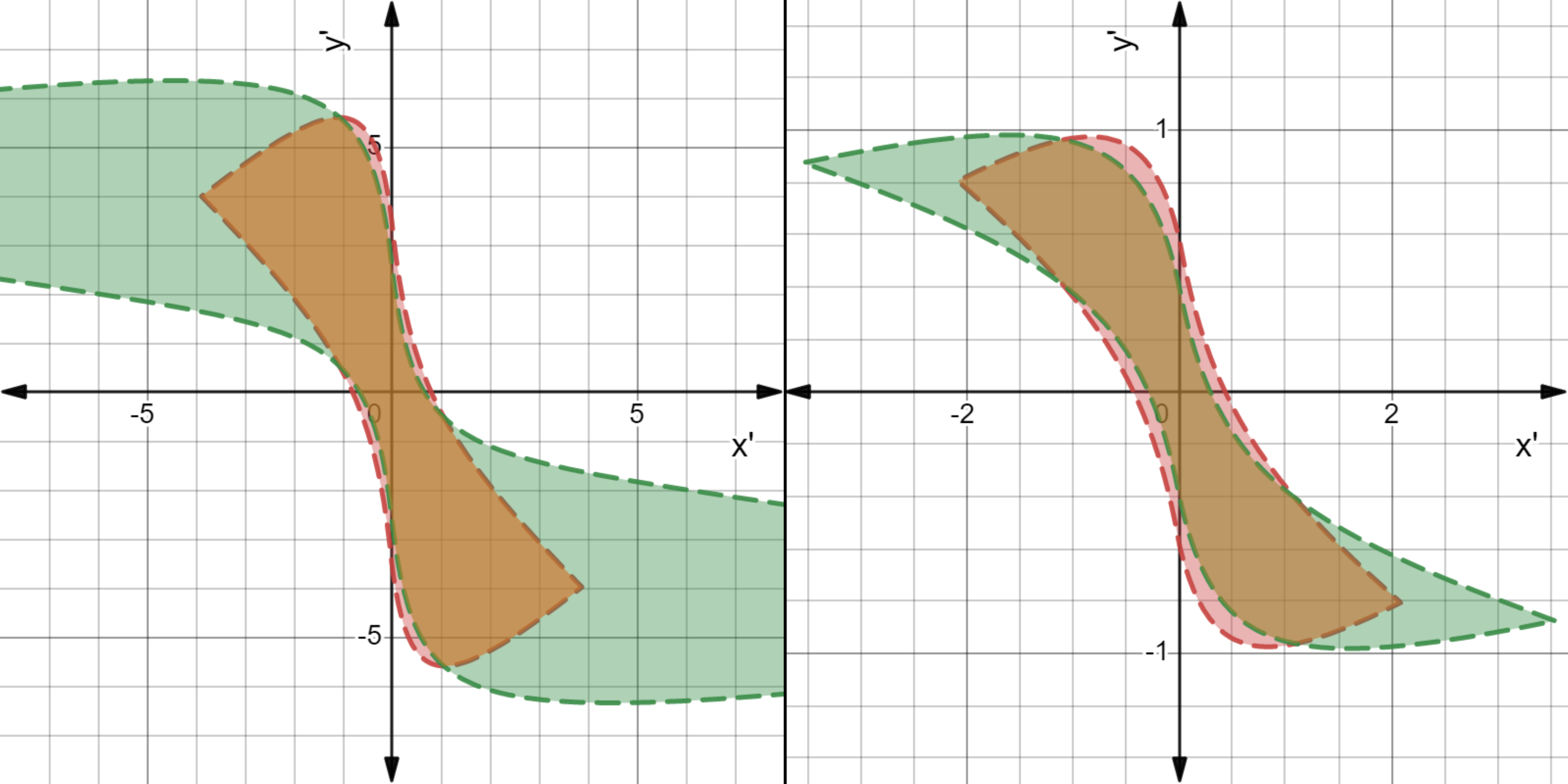}
    \caption{Initial state's feasible region (Orange)  for example 1(left) and 2(right) (solution satisfying  $C1$(Red) and $C2$ (Green)).   }
    \label{figset}
\end{figure}
\begin{figure}
    \centering
\includegraphics[width=9cm,height=2.5cm]{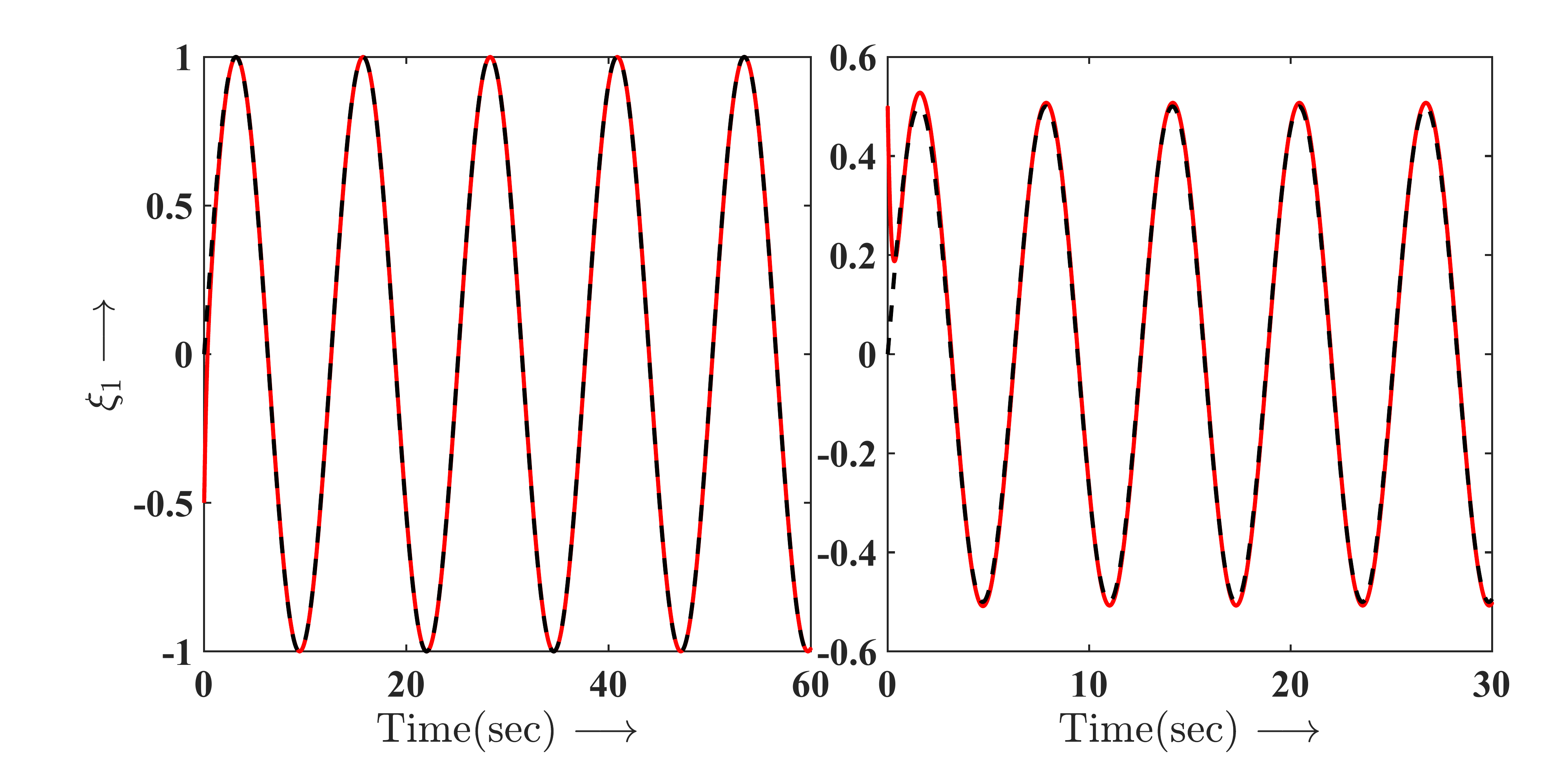}
    \caption{Tracking performance for example 1 (left) and 2 (right) (output (solid line) follows desired trajectory (dashed line)).}
    \label{figtrack}
\end{figure}
\begin{figure}
    \centering
\includegraphics[width=9cm,height=2.5cm]{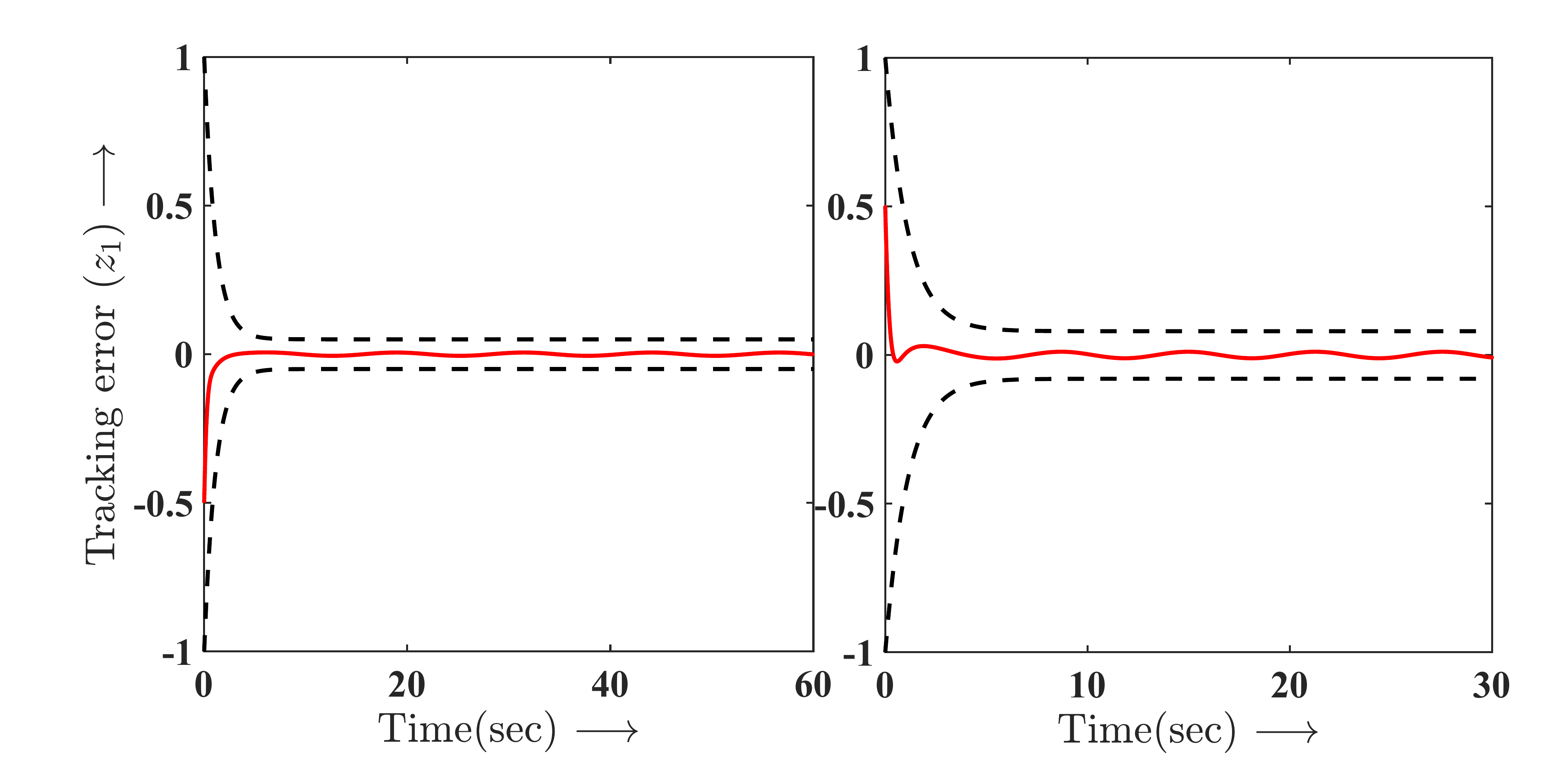}
    \caption{Tracking error (solid line) within  prescribed performance constraint (dashed line); Example 1 (left) and 2 (right).}
    \label{figerror}
\end{figure}
\begin{figure}
    \centering
\includegraphics[width=9cm,height=2.5cm]{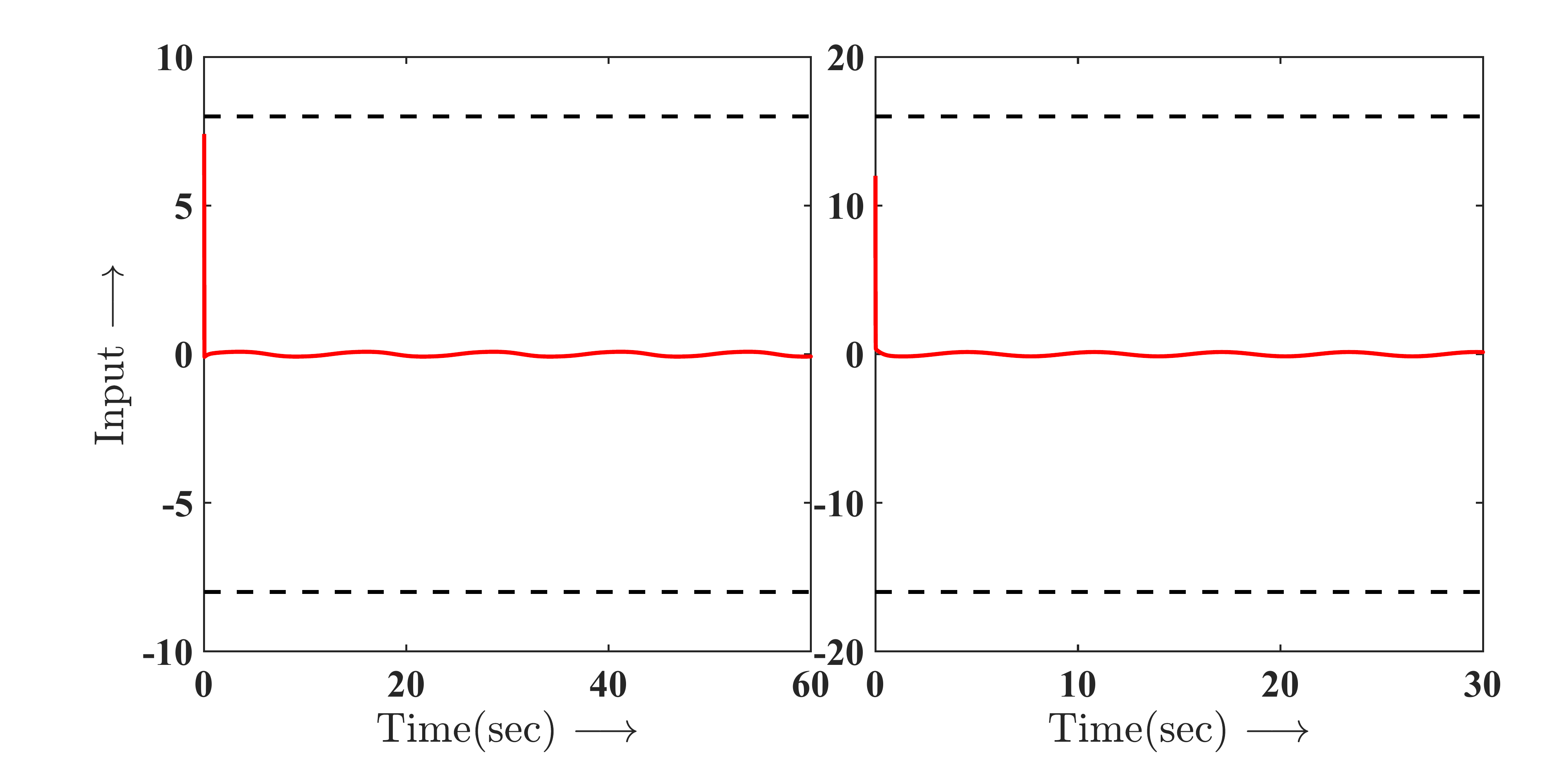}
    \caption{Input (solid line) within the prescribed input constraint(dashed line); Example 1 (left) and 2 (right).}
    \label{figinp}
\end{figure}
\textit{Example 2:} Consider a system
\begin{equation}
    \begin{split}
        \dot\xi_1&=0.5\sin(\xi_1)+5\xi_2 +d_1,\\
         \dot\xi_2&=\sin(\xi_1)+\xi_2+7\upsilon +d_2,\\
         y&=\xi_1,
    \end{split}
\end{equation}
where $\xi_1$ and $\xi_2$ are states of the system, $\upsilon$ is the input, $y$ is the output, $d_1=0.2\sin(t)$ and $d_2=0.5\sin(t)$ are the disturbances. For the simulation initial conditions are chosen as $\xi_1(0)=0.5$ and $\xi_2(0)=-0.8$, desired output $y_d=0.5\sin(t)$. The performance constraint on tracking error $z_1=y-y_d$ is prescribed as as $|z_1|<\psi_1=(1-0.08)e^{-0.9t}+0.08$, and $p_1=|z_1(0)|+\Delta_1=1$, with $\Delta_1=0.5,$ and the input constraint is prescribed as $|\upsilon|<16$. The controller is designed in a similar fashion to example 1, with design parameters: $c_1=c_2=\frac{\pi}{2}, \bar\upsilon_1=1$, $\psi_2=(0.4-0.01)e^{-0.5t}+0.01$. Note that, here $p_2=0.3$ is chosen in accordance with $\Delta_2=0.1$. Further, the verify the feasibility of the prescribed constraints can be verified using \eqref{cond} or \eqref{condex}, with  the following parameters: $k_1=0.5, k_2=1, \bar g_1=\barbelow g_1=5, \bar g_2=\barbelow g_2=7, \bar d_1=0.2, \bar d_2=0.5, \bar v_0=0.5, \text{~and~}r_0=0.5.$ Similar to example 1, we can find the  feasible set of the state initial conditions, shown in fig., and contains (0.2,-0.8). Thus the selected initial conditions and  prescribed constraints suit the proposed control framework. The simulation results are shown in Figs. \ref{figtrack}-\ref{figinp}. Fig. \ref{figtrack} shows tracking performance. Fig. \ref{figerror} and Fig. \ref{figinp} exhibit how tracking error and input obey their constraints, respectively. The results are as expected in Theorem 1.
\section{Conclusion}\label{sec:concl}
A controller has been proposed for a class of unknown strict-feedback nonlinear systems. The proposed controller is of low complexity and approximation free, as it does not involve any learning agent to deal with the unknown dynamics. Also, the controller has robust tracking performance under PIC. Further, the devised feasibility condition helps us avoid arbitrary prescription of PIC and also helps in determining the initial operating state-space. 
\begin{appendices}
    \section{{Proof of  Lemmas and corollaries }}
    \subsection{Proof of Lemma 1}\label{lemma1}
Using the fact, $ \theta_i(t)\in \bar{\mathbb{X}}, ~\forall (t,i)\in [0,t^*) \times \mathbb{N}_n,$
% \begin{align}\label{fact}
%     \theta_i(t)\in \bar{\mathbb{X}}, ~\forall (t,i)\in [0,t^*) \times \mathbb{N}_n,
% \end{align}
and following \eqref{vib}, and \eqref{theta} and \eqref{psib}, it can be found that $\upsilon_i(t)\in \mathcal{L}^\infty$ and $ z_i(t)\in \mathcal{L}^\infty,$ $\forall (t,i)\in [0,t^*) \times \mathbb{N}_n,$ respectively. Consequently following \eqref{error}, we have $\xi_i(t)$ and $\bar \xi_i(t)=[\xi_1(t), \ldots, \xi_i(t)]^T$ $\in \mathcal{L}^\infty, \forall (t,i)\in [0,t^*) \times \mathbb{N}_n.$ Also following the fact that in \eqref{sys1}  $f_i$ and  $g_i$  are smooth nonlinear function function of $\bar \xi_i$, we have $f_i \in \mathcal{L}^\infty$, $g_i \in \mathcal{L}^\infty$, $~\forall (t,i)\in [0,t^*) \times \mathbb{N}_n$. 
% \begin{align}\label{z1l}
%     \dot z_1= f_1(\bar\xi_1)+g_1(\bar \xi_1)z_2+ g_1(\bar \xi_1)\upsilon_1+d_1 -\dot \upsilon_{0}.
% \end{align}
With the so far established boundedness and following assumption  \ref{a3} and \ref{a4}, for $i=1$ it can be inferred from \eqref{erd} or $  \dot z_1= f_1(\bar\xi_1)+g_1(\bar \xi_1)z_2+ g_1(\bar \xi_1)\upsilon_1+d_1 -\dot \upsilon_{0},$
% \begin{align}\label{z1l}
%     \dot z_1= f_1(\bar\xi_1)+g_1(\bar \xi_1)z_2+ g_1(\bar \xi_1)\upsilon_1+d_1 -\dot \upsilon_{0},
% \end{align}
that  $\dot z_1 \in \mathcal{L}^\infty, ~\forall t \in[0,t^*).$ Consequently, noting the boundedness of $\psi_1$ and $\dot \psi_1$ in \eqref{psib} and \eqref{psidb}, respectively, it can be inferred from \eqref{thetad}, that $\dot \theta_1 \in \mathcal{L}^\infty, ~\forall t \in[0,t^*).$ Further, noting the boundedness of $\phi_1$ in \eqref{vidrange} and following \eqref{inpd}, we have $\dot \upsilon_1 \in \mathcal{L}^\infty, ~\forall t \in [0,t^*).$  Following the same procedure recursively, it is straightforward to obtain  $\dot z_i \in \mathcal{L}^\infty$, $\dot \theta_i \in \mathcal{L}^\infty$ and $\dot \upsilon_i \in \mathcal{L}^\infty$, $~\forall t\in [0,t^*)$ and $i \in \{2,\ldots, n\}.$ 
%  Using the given fact that $\theta_i(t)\in \bar{\mathbb{X}} $,  $\forall (t,i)\in [0,t^*) \times \mathbb{N}_n$ and following \eqref{vib}, it can be found that 
% \begin{align}
%     \upsilon_i(t)\in \mathcal{L}^\infty, \forall (t,i)\in [0,t^*) \times \mathbb{N}_n. \label{alphal}
% \end{align}
% % From \eqref{constr} it is straightforward to infer 
% % \begin{align}
% %     q_i\le{\psi_i}\le p_i,~  \forall (t,i)\in [0,\infty) \times \mathbb{N}_n. \label{psib}
% % \end{align}
% Following \eqref{theta}, \eqref{psib} and taking into consideration $\theta_i(t)\in \bar{\mathbb{X}} $,  $\forall (t,i)\in [0,t^*) \times \mathbb{N}_n$, we have 
% \begin{align}
%     z_i(t)\in \mathcal{L}^\infty, \forall (t,i)\in [0,t^*) \times \mathbb{N}_n.\label{zl}
% \end{align}
% Following \eqref{error}, and noting \eqref{alphal} and \eqref{zl}, we have
% \begin{align}
%    \xi_i(t)\in \mathcal{L}^\infty, \forall (t,i)\in [0,t^*) \times \mathbb{N}_n.\label{xl}
% \end{align}
\vspace{-0.3cm}
\subsection{Proof of Lemma 2}\label{lemma2} 
Given that $\theta_i(t)\in \bar{\mathbb{X}}$ and $\bar{\mathbb{X}}=(-1 ~1)$, hence $ \abs{\theta_i}<1,~  \forall (t,i)\in \mathbb{R}^+_0 \times \mathbb{N}_n.$ Using \eqref{theta}, $\abs{z_i}<\psi_i,~ \forall (t,i)\in [0,t^*) \times \mathbb{N}_n.$ From \eqref{error}, we have $ \xi_i=z_i+\upsilon_{i-1}.$ Using \eqref{vib},  it can be established  that $ \abs{\xi_i}<\psi_i+\bar v_{i-1},~\forall (t,i)\in [0,t^*) \times \mathbb{N}_n.$ Further, using   \eqref{psib}, we have $\abs{\xi_i}\le p_{i}+\bar v_{i-1}~  \forall (t,i)\in [0,t^*) \times \mathbb{N}_n.$ Consequently following  assumption \ref{a1}, we have  $\abs{f_i}\le k_i \norm{\delta_i} ~  \forall (t,i)\in [0,t^*) \times \mathbb{N}_n$, where $ \delta_{i}=[p^{}_{1}+\bar v_0, \ldots,p_{i}+\bar v_{i-1}]^T~ \forall i\in  \mathbb{N}_n.$
% Given that $\theta_i(t)\in \bar{\mathbb{X}}$ and $\bar{\mathbb{X}}=(-1 ~1)$, hence
% \begin{align}
%     \abs{\theta_i}<1,~  \forall (t,i)\in [0,\infty) \times \mathbb{N}_n.\label{thetaib}
% \end{align}
% Using \eqref{theta} and \eqref{thetaib}, 
% \begin{align}
%     \abs{z_i}<\psi_i,~ \forall (t,i)\in [0,t^*) \times \mathbb{N}_n. \label{zb}
% \end{align}
% From \eqref{error}, we have
% \begin{align}
%     \xi_i=z_i+\upsilon_{i-1}. \label{xpri}
% \end{align}
% Using \eqref{vib} and \eqref{zb} in  \eqref{xpri}, it can be established  that
% \begin{align} \label{stateb}
%     \abs{\xi_i}<\psi_i+\bar v_{i-1},~\forall (t,i)\in [0,t^*) \times \mathbb{N}_n.
% \end{align}
% Using \eqref{psib} in \eqref{stateb}, we have
% \begin{align}
% \abs{\xi_i}\le p_{i}+\bar v_{i-1}~  \forall (t,i)\in [0,t^*) \times \mathbb{N}_n. \label{xib}
% \end{align}
% Using Assumption \ref{a1} and \eqref{xib}, we have
% \begin{align}
%     \abs{f_i}\le k_i \norm{\delta_i} ~  \forall (t,i)\in [0,t^*) \times \mathbb{N}_n,\label{fib}
% \end{align}
% where 
% \begin{align}
%   \delta_{i}=[p^{}_{1}+\bar v_0, \ldots,p_{i}+\bar v_{i-1}]^T~ \forall i\in  \mathbb{N}_n. \label{deltai} 
% \end{align}
\end{appendices}

% \ifCLASSOPTIONcaptionsoff
%  \newpage
% \fi
\bibliographystyle{ieeetr}
\bibliography{ref.bib }

\begin{thebibliography}{10}

\bibitem{Yong2020}
K.~Yong, M.~Chen, Y.~Shi, and Q.~Wu, ``{Flexible performance-based robust
  control for a class of nonlinear systems with input saturation},'' {\em
  Automatica}, vol.~122, dec 2020.

\bibitem{TEE2009918}
K.~P. Tee, S.~S. Ge, and E.~H. Tay, ``Barrier lyapunov functions for the
  control of output-constrained nonlinear systems,'' {\em Automatica}, vol.~45,
  no.~4, pp.~918--927, 2009.

\bibitem{TEE20112511}
K.~P. Tee, B.~Ren, and S.~S. Ge, ``Control of nonlinear systems with
  time-varying output constraints,'' {\em Automatica}, vol.~47, no.~11,
  pp.~2511--2516, 2011.

\bibitem{5499019}
B.~Ren, S.~S. Ge, K.~P. Tee, and T.~H. Lee, ``Adaptive neural control for
  output feedback nonlinear systems using a barrier lyapunov function,'' {\em
  IEEE Transactions on Neural Networks}, vol.~21, no.~8, pp.~1339--1345, 2010.

\bibitem{9621216}
P.~K. Mishra and N.~K. Verma, ``On controller design for nonlinear systems with
  pure state constraints,'' {\em IEEE Transactions on Circuits and Systems II:
  Express Briefs}, vol.~69, no.~4, pp.~2236--2240, 2022.

\bibitem{Bechlioulis2008a}
C.~P. Bechlioulis and G.~A. Rovithakis, ``{Robust adaptive control of feedback
  linearizable MIMO nonlinear systems with prescribed performance},'' {\em IEEE
  Transactions on Automatic Control}, vol.~53, no.~9, pp.~2090--2099, 2008.

\bibitem{Bechlioulis2014a}
C.~P. Bechlioulis and G.~A. Rovithakis, ``{A low-complexity global
  approximation-free control scheme with prescribed performance for unknown
  pure feedback systems},'' {\em Automatica}, vol.~50, no.~4, pp.~1217--1226,
  2014.

\bibitem{9707600}
K.~Mei, S.~Ding, and C.-C. Chen, ``Fixed-time stabilization for a class of
  output-constrained nonlinear systems,'' {\em IEEE Transactions on Systems,
  Man, and Cybernetics: Systems}, vol.~52, no.~10, pp.~6498--6510, 2022.

\bibitem{9007508}
J.~Ni and P.~Shi, ``Adaptive neural network fixed-time leader–follower
  consensus for multiagent systems with constraints and disturbances,'' {\em
  IEEE Transactions on Cybernetics}, vol.~51, no.~4, pp.~1835--1848, 2021.

\bibitem{9694521}
L.~Sun, H.~Cao, and Y.~Song, ``Prescribed performance control of constrained
  euler-language systems chasing unknown targets,'' {\em IEEE Transactions on
  Cybernetics}, pp.~1--12, 2022.

\bibitem{Mishra2021f}
P.~K. Mishra, N.~K. Dhar, and N.~K. Verma, ``{Adaptive Neural-Network Control
  of MIMO Nonaffine Nonlinear Systems with Asymmetric Time-Varying State
  Constraints},'' {\em IEEE Transactions on Cybernetics}, vol.~51,
  pp.~2042--2054, 4 2021.

\bibitem{8640807}
Y.~Zhang, X.~Su, Z.~Liu, and C.~L.~P. Chen, ``Event-triggered adaptive fuzzy
  tracking control with guaranteed transient performance for mimo nonlinear
  uncertain systems,'' {\em IEEE Transactions on Cybernetics}, vol.~51, no.~2,
  pp.~736--749, 2021.

\bibitem{HU2008512}
T.~Hu, A.~R. Teel, and L.~Zaccarian, ``Anti-windup synthesis for linear control
  systems with input saturation: Achieving regional, nonlinear performance,''
  {\em Automatica}, vol.~44, no.~2, pp.~512--519, 2008.

\bibitem{Wen2011}
C.~Wen, J.~Zhou, Z.~Liu, and H.~Su, ``{Robust adaptive control of uncertain
  nonlinear systems in the presence of input saturation and external
  disturbance},'' {\em IEEE Transactions on Automatic Control}, vol.~56,
  pp.~1672--1678, jul 2011.

\bibitem{Chen2015}
M.~Chen, G.~Tao, and B.~Jiang, ``{Dynamic Surface Control Using Neural Networks
  for a Class of Uncertain Nonlinear Systems with Input Saturation},'' {\em
  IEEE Transactions on Neural Networks and Learning Systems}, vol.~26,
  pp.~2086--2097, sep 2015.

\bibitem{CHEN2011452}
M.~Chen, S.~S. Ge, and B.~Ren, ``Adaptive tracking control of uncertain mimo
  nonlinear systems with input constraints,'' {\em Automatica}, vol.~47, no.~3,
  pp.~452--465, 2011.

\bibitem{berger2022input}
T.~Berger, ``Input-constrained funnel control of nonlinear systems,'' {\em
  arXiv preprint arXiv:2202.05494}, 2022.

\bibitem{Hopfe2010}
N.~Hopfe, A.~Ilchmann, and E.~P. Ryan, ``{Funnel control with saturation:
  Linear MIMO systems},'' {\em IEEE Transactions on Automatic Control},
  vol.~55, pp.~532--538, feb 2010.

\bibitem{Hopfe2010a}
N.~Hopfe, A.~Ilchmann, and E.~P. Ryan, ``{Funnel control with saturation:
  Nonlinear SISO systems},'' {\em IEEE Transactions on Automatic Control},
  vol.~55, pp.~2177--2182, sep 2010.

\bibitem{Kanakis2020}
G.~S. Kanakis and G.~A. Rovithakis, ``{Guaranteeing Global Asymptotic Stability
  and Prescribed Transient and Steady-State Attributes via Uniting Control},''
  {\em IEEE Transactions on Automatic Control}, vol.~65, pp.~1956--1968, may
  2020.

\bibitem{Asl2019}
H.~J. Asl, T.~Narikiyo, and M.~Kawanishi, ``{Bounded-input prescribed
  performance control of uncertain Euler–Lagrange systems},'' {\em IET
  Control Theory and Applications}, vol.~13, pp.~17--26, jan 2019.

\bibitem{10093463}
P.~K. Mishra and P.~Jagtap, ``On controller design for unknown nonlinear
  systems with prescribed performance and input constraints,'' in {\em 2022
  Eighth Indian Control Conference (ICC)}, pp.~212--217, 2022.

\bibitem{10004950}
P.~K. Mishra and P.~Jagtap, ``Approximation-free prescribed performance control
  with prescribed input constraints,'' {\em IEEE Control Systems Letters},
  vol.~7, pp.~1261--1266, 2023.

\bibitem{Zhang2017a}
J.~X. Zhang and G.~H. Yang, ``{Robust adaptive fault-tolerant control for a
  class of unknown nonlinear systems},'' {\em IEEE Transactions on Industrial
  Electronics}, vol.~64, pp.~585--594, 1 2017.

\bibitem{wood1996estimation}
G.~Wood and B.~Zhang, ``Estimation of the lipschitz constant of a function,''
  {\em Journal of Global Optimization}, vol.~8, no.~1, pp.~91--103, 1996.

\bibitem{Bubeck2011}
S.~Bubeck, G.~Stoltz, and J.~Y. Yu, ``Lipschitz bandits without the lipschitz
  constant,'' {\em Lecture Notes in Computer Science (including subseries
  Lecture Notes in Artificial Intelligence and Lecture Notes in
  Bioinformatics)}, vol.~6925 LNAI, pp.~144--158, 2011.

\bibitem{malherbe2017global}
C.~Malherbe and N.~Vayatis, ``Global optimization of lipschitz functions,'' in
  {\em International Conference on Machine Learning}, pp.~2314--2323, PMLR,
  2017.

\bibitem{7428909}
Y.-F. Gao, X.-M. Sun, C.~Wen, and W.~Wang, ``Observer-based adaptive nn control
  for a class of uncertain nonlinear systems with nonsymmetric input
  saturation,'' {\em IEEE Transactions on Neural Networks and Learning
  Systems}, vol.~28, no.~7, pp.~1520--1530, 2017.

\end{thebibliography}
% \bibliography{mendref.bib}
\end{document}